\numberwithin{equation}{section}
\definecolor{citegreen}{rgb}{0,0.6,0}
\definecolor{refred}{rgb}{0.8,0,0}
\crefname{definition}{Definition}{Definitions}
\crefname{theorem}{Theorem}{Theorems}
\crefname{thmx}{Theorem}{Theorems}
\crefname{lemma}{Lemma}{Lemmas}
\crefname{step}{Step}{Steps}
\crefname{substep}{Step}{Steps}
\crefname{claim}{Claim}{Claims}
\crefname{proposition}{Proposition}{Propositions}
\crefname{corollary}{Corollary}{Corollaries}
\crefname{remark}{Remark}{Remarks}
\crefname{section}{Section}{Sections}
\crefname{subsection}{Section}{Sections}
\crefname{chapter}{Chapter}{Chapters}
\crefname{appendix}{Appendix}{Appendices}
\crefname{equation}{}{}
\newcommand{\R}{\mathbb{R}}
\newcommand{\N}{\mathbb{N}}
\def\HHH{{\rm H}}
\def\RRR{{\mathrm R}}
\newcommand{\pa}{\partial}
\newcommand{\ep}{\varepsilon}
\newcommand{\Ric}{{\rm Ric}}
\newcommand{\na}{\nabla}
\newcommand{\hhh}{{\rm h}}
\renewcommand{\epsilon}{\varepsilon}
\renewcommand{\phi}{\varphi}
\newcommand{\Sf}{\mathbb{S}}
\mathchardef\emptyset="001F
\DeclarePairedDelimiter{\abs}{\lvert}{\rvert}
\definecolor{vgreen}{rgb}{0.1,0.5,0.2}
\definecolor{viola}{RGB}{85,26,139}
\renewcommand{\theequation}{\thesection.\arabic{equation}}
\newtheorem{theorem}{Theorem}[section]
\newtheorem{remark}[theorem]{Remark}
\newtheorem{corollary}[theorem]{Corollary}
\newtheorem{proposition}[theorem]{Proposition}
\newtheorem{lemma}[theorem]{Lemma}
\title[Equality in Heintze-Karcher]{The equality case in the substatic Heintze-Karcher inequality}
\author[S.~Borghini]{Stefano Borghini}
\address{S.~Borghini, Universit\`a degli Studi di Trento,
via Sommarive 14, 38123 Povo (TN), Italy}
\email{stefano.borghini@unitn.it}
\author[M.~Fogagnolo]{Mattia Fogagnolo}
\address{M.~Fogagnolo, Universit\`a di Padova, via Trieste 63, 35121 Padova, Italy}
\email{mattia.fogagnolo@unipd.it}
\author[A.~Pinamonti]{Andrea Pinamonti}
\address{A.~Pinamonti, Universit\`a degli Studi di Trento,
via Sommarive 14, 38123 Povo (TN), Italy}
\email{andrea.pinamonti@unitn.it}
\begin{document}

\begin{abstract}
We provide a rigidity statement for the equality case for the Heintze-Karcher inequality in substatic manifolds. We apply such result in the warped product setting to fully remove  assumption (H4) in the celebrated Brendle's characterization of constant mean curvature hypersurfaces in warped products.
\end{abstract}

\maketitle

\bigskip
\noindent\textsc{MSC (2020): 
49Q10, 53C24 , 58J32, 53E10, 53C21. 
}

\smallskip
\noindent{\underline{Keywords}:  
Heintze-Karcher inequality, substatic manifolds, constant mean curvature, Alexandrov theorem.}


\maketitle

\section{Introduction and main statements}
The Heintze-Karcher inequality usually denotes the geometric inequality that, in its more simple form for domains $\Omega$ sitting in $\R^n$, with smooth and strictly mean-convex boundary $\Sigma$, reads as
\begin{equation}
\label{eq:classicalheintze}
    \frac{n-1}{n}\int_\Sigma \frac{f}{\HHH} d \sigma \geq \abs{\Omega}.
\end{equation}
This inequality, that was essentially contained in the seminal earlier paper by Heintze-Karcher \cite{heintze-karcher}  was first pointed out in this very form by Ros \cite[Theorem 1]{ros}, where it was also observed that it holds in a general manifold with nonnegative Ricci curvature. Moreover, he showed that equality in \cref{eq:classicalheintze} is in force only if $\Omega$ is a flat Euclidean balls.

Li-Xia \cite{Li_Xia_17} very vastly generalized Ros' Heintze-Karcher inequality to the setting of \emph{substatic} Riemannian manifolds with horizon boundary. We recall that with this locution we mean a Riemannian manifold $(M, g)$ endowed with a nonnegative smooth function $f$ (the substatic potential) satisfying
\begin{equation}
\label{eq:substatic-intro}
    f \Ric - \nabla\nabla f + \Delta f g \geq 0,
\end{equation}
and where $\partial M = \{f = 0\}$ is a  compact, minimal, regular level set of $f$ (i.e. with ${\nabla f} \neq 0$ on $\partial M$). In particular, the boundary of such a manifold is empty if and only if $f$ is strictly positive. We are occasionally referring to the tensor on the left-hand side of \cref{eq:substatic-intro} as \emph{substatic Ricci tensor}. Very interestingly, it can be in fact checked to arise as Ricci tensor of a suitable affine connection on $(M, g)$, see \cite{Li_Xia_19}. The condition \cref{eq:substatic-intro} stems  naturally from the Einstein Fields Equations of General Relativity, and it is easily observed to hold for initial data sets of \emph{static} spacetimes. We leave the interested reader to~\cite[Lemma 3.8]{Wang_Wang_Zhang} and~\cite[Appendix A]{borghini-fogagnolo} for the explicit computations.

Letting $\Sigma$ be a smooth strictly mean-convex hypersurface homologous to $\partial M$, and $\Omega$ the bounded set enclosed by $\Sigma$ and $\partial M$, the substatic Heintze-Karcher inequality \cite[Theorem 1.3]{Li_Xia_17} has been sharpened
in \cite{fogagnolo-pinamonti} as 
\begin{equation}
\label{eq:HK-intro}
    \frac{n-1}{n}\int_\Sigma \frac{f}{\HHH} d \sigma  \geq \int_\Omega f d \mu + c_{\partial M} \int_{\partial M} \abs{\nabla f} d\sigma,
\end{equation}
where 
\begin{equation}
\label{eq:c-intro}
c_{\partial M} =\frac{n-1}{n} \frac{\int_{\partial M} \abs{\nabla f} d\sigma}{\int_{\partial M}\abs{\nabla f}\left[\frac{\Delta f}{f} - \frac{\nabla \nabla f}{f} \left(\frac{\nabla f}{\abs{\nabla f}}, \frac{\nabla f}{\abs{\nabla f}}\right)\right] d\sigma}\,.
\end{equation}
The above constant has been shown to be well defined and strictly positive in \cite[Proposition 3.2]{fogagnolo-pinamonti}, given the existence of a strictly mean-convex $\Sigma$ homologous to $\pa M$ as in our case.

Our first result shows that a strong rigidity is triggered when \cref{eq:HK-intro} holds with equality sign. 
\begin{theorem}
\label{thm:rigiditybaby}
    Let $(M, g)$ be a substatic Riemannian manifold with connected horizon boundary $\partial M$, such that the substatic potential $f$ satisfies 
    \begin{equation}
    \label{eq:continualbordo-intro}
        \frac{\nabla \nabla f}{f} \in C^{0, \alpha}(M \cup \partial M)
    \end{equation}
    for $\alpha \in (0, 1)$.
    Let $\Sigma$ be a connected, smooth \emph{strictly mean-convex} hypersurface homologous to $\partial M$  Then, the Heintze-Karcher inequality \cref{eq:HK-intro} holds with equality if and only if the domain $\Omega$ such that $\partial \Omega = \Sigma \sqcup \partial M$ is isometric to 
    \begin{equation}
\label{eq:rigidity_intro-homobaby}
       \left([s_0, \overline{s}] \times \partial M, \frac{ds \otimes ds}{f(s)^2} + s^2 g_{\partial M}\right).
    \end{equation}
    \end{theorem}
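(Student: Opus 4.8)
\emph{Strategy of the proof.} One implication is a direct computation: on the warped product \cref{eq:rigidity_intro-homobaby} the substatic tensor $f\Ric-\nabla\nabla f+\Delta f\,g$ vanishes in the direction $\partial_s$, the slices $\{s=\mathrm{const}\}$ are totally umbilic, and the auxiliary function driving the proof of \cref{eq:HK-intro} can be written down explicitly, so every estimate entering that proof becomes an identity. For the converse, we retrace the proof of \cref{eq:HK-intro} from \cite{fogagnolo-pinamonti} and track its equality cases. That argument rests on a function $u$ on $\Omega$ solving a weighted torsion problem---roughly, $f\Delta u-\langle\nabla f,\nabla u\rangle$ a constant multiple of $f$ in $\Omega$, with $u=0$ on $\Sigma$ and a suitable condition along the degenerate boundary $\partial M$---together with three ingredients: a substatic Reilly-type identity, which produces the curvature term $\int_\Omega(f\Ric-\nabla\nabla f+\Delta f\,g)(\nabla u,\nabla u)\,d\mu$ and a boundary contribution on $\Sigma$ controlled by $\HHH$; the Newton inequality $\abs{\nabla\nabla u}^2\ge(\Delta u)^2/n$; and a Cauchy--Schwarz inequality on $\Sigma$ pairing $(f\HHH)^{1/2}$ with $(f/\HHH)^{1/2}$. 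The horizon term $c_{\partial M}\int_{\partial M}\abs{\nabla f}\,d\sigma$, with $c_{\partial M}$ as in \cref{eq:c-intro}, emerges from the $\partial M$-boundary integral.

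Equality in \cref{eq:HK-intro} therefore forces, all at once: \textbf{(i)} $(f\Ric-\nabla\nabla f+\Delta f\,g)(\nabla u,\nabla u)\equiv0$ on $\Omega$; \textbf{(ii)} $\nabla\nabla u=\tfrac{\Delta u}{n}\,g$ on $\Omega$; and \textbf{(iii)} saturation of the boundary inequalities on $\Sigma$ and on $\partial M$. Using the maximum principle and the sign of the torsion datum one checks that $u$ has no interior critical points; moreover (ii) makes $\abs{\nabla u}^2$ constant along the connected level sets of $u$. Hence $\Omega$ is foliated by these level sets, each carried diffeomorphically onto the connected hypersurface $\partial M$ by the normalized gradient flow of $u$.

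With (ii) in force and $u$ free of critical points, the fact that $\nabla\nabla u$ is pure trace makes $\Omega$ split as a warped product over the level sets of $u$ (classical rigidity of Tashiro--Obata type): $\Omega$ is isometric to $\big([\rho_0,\rho_1]\times\partial M,\ d\rho\otimes d\rho+\psi(\rho)^2\gamma\big)$, where $\rho$ is arclength along $\nabla u$, $\gamma$ is a fixed metric on $\partial M$, and $u'$ is proportional to $\psi$. To identify this with \cref{eq:rigidity_intro-homobaby} we invoke (i): a nonnegative symmetric $2$-tensor which vanishes on $\nabla u=u'\,\partial_\rho$ must annihilate $\partial_\rho$ altogether, so $\nabla\nabla f(\partial_\rho,X)=0$ for every fibre direction $X$; integrating this ODE in $\rho$ and using $f\equiv0$ on the leaf $\partial M$ forces $f$ to be a function of $\rho$ alone. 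Feeding $f=f(\rho)$ into $(f\Ric-\nabla\nabla f+\Delta f\,g)(\partial_\rho,\partial_\rho)=0$ gives $f'/f=\psi''/\psi'$, hence $f$ is a constant multiple of $\psi'$; passing to the area radius $s:=\psi(\rho)$, after a harmless rescaling of the coordinate and of the fibre metric, turns the metric into $\frac{ds\otimes ds}{f(s)^2}+s^2 g_{\partial M}$, with $\abs{\nabla f}=\abs{f'(s)}f(s)$ as it should be. The two boundary components $\{s=s_0\}\times\partial M$ and $\{s=\overline{s}\}\times\partial M$ are then $\partial M$ and $\Sigma$, and this is \cref{eq:rigidity_intro-homobaby}.

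The genuine difficulty---and the reason for hypothesis \cref{eq:continualbordo-intro}---is the behaviour at the horizon $\partial M=\{f=0\}$: there the coefficient $\langle\nabla\log f,\cdot\rangle$ of the torsion operator, and the tensor $\nabla\nabla f/f$ appearing in \cref{eq:c-intro}, are a priori singular, so interior elliptic theory does not reach $\partial M$. The assumption $\nabla\nabla f/f\in C^{0,\alpha}(M\cup\partial M)$ is precisely what makes the degenerate boundary value problem solvable with Schauder control up to $\partial M$, keeps $c_{\partial M}$ finite, and lets the concircular identity (ii) and the foliation be continued across $\partial M$ with $\nabla u\neq0$ there---so that $\partial M$ is a leaf, necessarily totally geodesic (being minimal and umbilic), and the endpoint $s_0$ is a simple zero of $f$. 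Carrying out this boundary analysis, and checking that the region enclosed by $\Sigma$ and $\partial M$ is genuinely exhausted by the leaves, is the main technical burden; the interior argument above, by contrast, is essentially Ros's equality discussion transplanted to the substatic weight.
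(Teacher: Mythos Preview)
Your equality conditions (i) and (ii) are not the ones that actually come out of the substatic Heintze--Karcher proof, and this invalidates the rest of the argument. In the substatic Reilly identity the quadratic term is not $\abs{\nabla\nabla u}^2-(\Delta u)^2/n$ but rather
\[
\Big|\nabla\nabla u-\tfrac{\Delta u}{n}g-u\Big(\tfrac{\nabla\nabla f}{f}-\tfrac{\Delta f}{nf}g\Big)\Big|^2
\,+\,(f\Ric-\nabla\nabla f+\Delta f\,g)\Big(\nabla u-\tfrac{u}{f}\nabla f,\nabla u-\tfrac{u}{f}\nabla f\Big),
\]
so equality forces the \emph{twisted} identities
\[
\nabla\nabla u-\tfrac{\Delta u}{n}g=u\Big(\tfrac{\nabla\nabla f}{f}-\tfrac{\Delta f}{nf}g\Big),
\qquad
Q\Big(\nabla u-\tfrac{u}{f}\nabla f,\cdot\Big)=0\quad\text{in that direction},
\]
and \emph{not} $\nabla\nabla u=\tfrac{\Delta u}{n}g$ or $Q(\nabla u,\nabla u)=0$. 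Consequently the Tashiro--Obata concircular splitting does not apply to $(u,g)$: the traceless Hessian of $u$ in $g$ is generically nonzero. The paper's remedy is to pass to the conformal metric $\tilde g=f^{-2}g$ and to $\phi=u/f$; the first twisted identity is then equivalent to $\nabla\nabla_{\tilde g}\phi=\tfrac{\Delta_{\tilde g}\phi}{n}\tilde g$, and it is $(\phi,\tilde g)$ that yields the warped splitting (only conformal at this stage).

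Your step ``$f$ is a function of $\rho$ alone'' also needs more than you indicate. Even with the correct direction $\nabla\phi$, vanishing of the nonnegative tensor $Q$ on $\partial_\rho$ gives $Q_{\rho\rho}=0$ but not immediately $Q_{i\rho}=0$; one needs the polarization trick $Q(\partial_\rho+\lambda\partial_i,\partial_\rho+\lambda\partial_i)\geq0$ for all $\lambda$ to extract $Q_{i\rho}=0$. This yields a first-order ODE in $\rho$ for $\partial_i(1/f)$, whose integration constant must be shown to vanish. The paper does this by letting $\rho\to\infty$ toward $\partial M$ and using that $\abs{\nabla f}$ is \emph{constant} on $\partial M$---a fact that requires precisely the hypothesis $\nabla\nabla f/f\in C^{0,\alpha}$ up to the boundary (since $\nabla\nabla f=f\cdot(\nabla\nabla f/f)\to0$ on $\partial M$, whence $\nabla\abs{\nabla f}^2$ is tangentially zero there). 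Your invocation ``using $f\equiv0$ on the leaf $\partial M$'' does not supply this; without constancy of $\abs{\nabla f}$ on $\partial M$ the limit argument fails and $f$ need not be radial.
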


A version of inequality \cref{eq:HK-intro} was originally obtained by Brendle \cite[Theorems 3.5 and 3.11]{brendle-alexandrov} as the crucial step for obtaining an Alexandrov-type Theorem in warped product manifolds. For proving \cref{eq:HK-intro}, such warped products were assumed to satisfy a set of assumptions (H1)-(H3), recalled and discussed in \cref{subsec:brendle-comparison} below. As we are going to recall, if
they are satisfied, then in particular the warped product is substatic with horizon boundary. 
Thus, \cref{thm:rigiditybaby} directly yields a rigidity statement for Brendle's Heintze-Karcher inequality if the additional, technical \cref{eq:continualbordo-intro} is satisfied. 
Such assumption was fundamental in the elliptic proof of \cref{eq:HK-intro} conceived by Li-Xia \cite{Li_Xia_17} and reworked by the second and third named authors \cite{fogagnolo-pinamonti}. 

\smallskip

Exploiting a new synergy between \cref{thm:rigiditybaby} and the geodesic flow proof worked out in \cite{brendle-alexandrov} to provide \cref{eq:HK-intro} in the warped product setting, we are able to remove assumption \cref{eq:continualbordo-intro} in the special warped product geometry, endowing \cite[Theorems 3.5 and 3.11]{brendle-alexandrov} of an optimal rigidity statement.

\begin{theorem}
\label{thm:warped-intro}
Let $(M, g)$ be a substatic warped product with cross section $\partial M = N$, of the form
\begin{equation}
\label{eq:warpedform-introHK}
    \left([s_0, \overline{s}) \times N, \frac{ds \otimes ds}{f(s)^2} + s^2 g_{N}\right).
\end{equation}
Let $\Sigma$ be a connected, smooth, strictly mean-convex hypersurface homologous to $N$ satisfying \cref{eq:HK-intro} with the equality sign. Then, $\Sigma = \{s = c\}$  for some $c \in (s_0, \overline{s})$.
\end{theorem}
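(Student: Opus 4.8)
The plan is to combine \cref{thm:rigiditybaby} with the specific warped product structure of \cref{eq:warpedform-introHK} to pin down the equality hypersurface. First, since the warped product in \cref{eq:warpedform-introHK} is substatic with horizon boundary $N = \partial M$ (as recalled in the discussion following \cref{thm:rigiditybaby}), and since by hypothesis $\Sigma$ is connected, smooth, strictly mean-convex, and homologous to $N$, and realizes equality in \cref{eq:HK-intro}, I would like to invoke \cref{thm:rigiditybaby}. The only obstruction is the technical regularity requirement \cref{eq:continualbordo-intro}, i.e.\ $\nabla\nabla f / f \in C^{0,\alpha}(M \cup \partial M)$. In the warped product geometry this expression can be computed explicitly in terms of $f = f(s)$ and its derivatives: the Hessian of $f$ has a radial component governed by $f f''$ and spherical components governed by $f f' / s$. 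The quotient $\nabla\nabla f/f$ therefore reduces to expressions like $f''$ and $f'/s$, which are smooth on the open part $[s_0,\overline{s}) \times N$; the subtlety is only the behavior at the horizon $s = s_0$, where $f(s_0) = 0$. Here one uses the standard regularity of the substatic potential near a regular horizon (as in the very definition of horizon boundary, $\nabla f \neq 0$ there) together with the warped product smoothing conditions, to verify that $\nabla\nabla f / f$ extends Hölder-continuously (in fact smoothly in the warped variables) up to $s = s_0$. I expect this verification to be a short computation rather than a genuine obstacle.

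Once \cref{eq:continualbordo-intro} is confirmed, \cref{thm:rigiditybaby} applies and yields that the domain $\Omega$ enclosed by $\Sigma$ and $N$ is isometric to a model piece
\begin{equation*}
    \left([s_0, \overline{s}] \times \partial M, \frac{ds \otimes ds}{f(s)^2} + s^2 g_{\partial M}\right),
\end{equation*}
with $\partial M = N$ and the boundary component $\Sigma$ corresponding to the slice $\{s = \overline{s}\}$. The remaining task is to upgrade this abstract isometry to the statement that $\Sigma$ is literally a coordinate slice $\{s = c\}$ inside the \emph{given} warped product \cref{eq:warpedform-introHK}. The key point is a uniqueness/rigidity observation: both $\Omega$ (sitting inside the ambient warped product) and the model piece above carry a warped product structure over the same cross section $N$ with the \emph{same} warping function $f$ (since the warping function is determined by the substatic potential, which is intrinsic), and they share the isometric horizon boundary $N$. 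Matching the two structures along $N$ and using that the warped metric is uniquely reconstructed from $f$ and $g_N$, the isometry must be the identity on the $s$-coordinate up to the natural identification; hence $\Sigma$, being the other boundary component, is the slice $\{s = c\}$ with $c = \overline{s}$ in the model, i.e.\ some $c \in (s_0, \overline{s})$ in the original coordinates.

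The main obstacle I anticipate is precisely this last identification step: \cref{thm:rigiditybaby} only gives an \emph{abstract} isometry of $\Omega$ with a model, and one must argue that this isometry respects the ambient warped product foliation — equivalently, that it sends slices to slices. To handle this I would exploit that the function $f$ (or equivalently the distance-type coordinate $s$, which on a warped product $[s_0,\overline{s}) \times N$ is characterized geometrically, e.g.\ via $|\nabla s|$ or via the level sets of $f$) is preserved under isometry, so the isometry carries the $s$-level sets of the model to the $s$-level sets of $\Omega$; since $\Sigma$ is one of these level sets in the model (the outer boundary), its image — which is $\Sigma$ itself — is an $s$-level set of the ambient warped product, i.e.\ $\Sigma = \{s = c\}$. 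A clean way to phrase this is to note that on the model the outer boundary is the unique level set of $f$ passing through it, and $f$ is intrinsic, so the conclusion transfers. Care is needed if $f$ is not strictly monotone in $s$ on all of $[s_0,\overline{s})$, but strict mean-convexity of $\Sigma$ together with the substatic condition rules out the degenerate configurations, so $\Sigma$ is a single well-defined slice $\{s = c\}$ with $c \in (s_0, \overline{s})$.
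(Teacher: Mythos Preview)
Your approach has a genuine gap at the very first step: the regularity condition \cref{eq:continualbordo-intro} is \emph{not} a short computation in the warped product case, and in fact it is precisely the obstacle that the whole of \cref{sec:HK-brendle} is built to circumvent. Writing $f(s)=\sqrt{c+s^2\eta(s^{-n})}$ as in \cref{prop:char-warped}, one computes (see \cref{lem:desittersoddisfa}) that the $dr\otimes dr$ component of $\nabla\nabla f/f$ equals
\[
(f')^2+ff''=\eta(s^{-n})+\tfrac{n(n-3)}{2}s^{-n}\eta'(s^{-n})+\tfrac{n^2}{2}s^{-2n}\eta''(s^{-n}),
\]
which involves $\eta''$. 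The substatic condition only gives $\eta''\geq 0$; it yields no control whatsoever on the continuity of $\eta''$ at the endpoint $s_0^{-n}$. The paper says this explicitly just before \cref{lem:desittersoddisfa}: ``there does not seem to be an easy way to show the regularity of the second and higher derivatives of $\eta$ up to the boundary. This is the very issue that in the end does not allow us to infer that \cref{eq:continualbordo-intro} holds in the warped product case.'' Your assertion that ``$\nabla\nabla f/f$ therefore reduces to expressions like $f''$ and $f'/s$'' already hides the problem: $f'$ and $f''$ individually blow up at $s_0$ (since $f(s_0)=0$ with $f^2$ smooth), and the finite combinations that survive depend on $\eta''$.

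The paper's route is therefore quite different. Assuming $\Sigma$ is \emph{not} a slice, one runs Brendle's inward geodesic flow in the conformal metric $\tilde g=f^{-2}g$ and shows (\cref{lem:totumb+friccinununullo}) that equality forces every equidistant $\Sigma_t$ to be totally umbilical with $[f\Ric-\nabla\nabla f+\Delta f g](\nu,\nu)=0$. One then proves the flow stays smooth for all $t\in[0,\infty)$ (\cref{prop:smoothevo}), which is nontrivial and uses the Heintze--Karcher inequality itself to bound the mean curvature. A Montiel-type argument (\cref{prop:montiel}) shows that a totally umbilical, non-slice $\Sigma_t$ satisfying that normal vanishing condition forces a \emph{second}, tangential direction of vanishing for the substatic Ricci tensor; by \cref{lem:splittingdesitter} this pins the metric on the whole of $\Omega$ to the Schwarzschild-type form \cref{eq:splittingdesitter}, for which $\eta$ is affine and hence $\eta''\equiv 0$. \emph{Only then} does \cref{eq:continualbordo-intro} hold (via \cref{lem:desittersoddisfa}), and \cref{thm:rigiditybaby} applies to give the contradiction. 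In short, the paper does not verify \cref{eq:continualbordo-intro} directly; it uses the geodesic-flow rigidity to reduce to the one family of warped products where it is automatic.
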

We address the reader to the beginning of \cref{sec:HK-warped} for a more detailed presentation of the very peculiar proof of the above result.

\smallskip

It was already observed in \cite[Section 6]{brendle-alexandrov} that a constant mean curvature hypersurface must fulfil the identity in \cref{eq:HK-intro}, as a consequence of a straightforward Minkowski identity \cite[Proposition 2.3]{brendle-alexandrov}.

 Thus, \cref{thm:warped-intro} directly provides the following characterization of hypersurfaces of constant mean curvature in substatic warped product, improving on \cite[Theorem 1.1]{brendle-alexandrov}. 
\begin{corollary}
\label{cor:alexandrov}
    Let $(M, g)$ be a substatic warped product with a connected horizon boundary, of the form
\begin{equation}
\label{eq:warpedform-intro}
    \left([s_0, \overline{s}) \times N, \frac{ds \otimes ds}{f(s)^2} + s^2 g_{N}\right).
\end{equation}
Let $\Sigma$ be a connected smooth hypersurface homologous to $\partial M = N$ of constant mean curvature. Then, $\Sigma = \{s = c\}$  for some $c \in (s_0, \overline{s})$. 
\end{corollary}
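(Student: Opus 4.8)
The plan is to derive \cref{cor:alexandrov} directly from \cref{thm:warped-intro}, once its two hypotheses have been checked for a hypersurface $\Sigma$ of constant mean curvature homologous to $N$: that $\Sigma$ is \emph{strictly} mean-convex, and that it realizes equality in the Heintze--Karcher inequality \cref{eq:HK-intro}. The latter is precisely the observation recalled above from \cite[Section~6]{brendle-alexandrov}: the Minkowski identity \cite[Proposition~2.3]{brendle-alexandrov}, valid for every hypersurface homologous to $N$, forces equality in \cref{eq:HK-intro} as soon as $\HHH$ is constant. Thus the only genuine point to be settled is that the constant value of $\HHH$ is positive with respect to the normal $\nu$ pointing away from the horizon.

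To this end I would exploit the same Minkowski identity. A direct computation shows that the vector field $X := s\, f(s)\,\partial_s$ --- which, in Brendle's parametrization $s = h(r)$, $\mathrm{d}r = \mathrm{d}s/f(s)$ of the warped product $\mathrm{d}r^2 + h(r)^2 g_N$, is just the conformal field $h(r)\,\partial_r$ --- satisfies $\nabla X = f\,g$, hence $\div X = n f$; note also that $\abs{X} \equiv s$, so that, despite the vanishing of $f$ on $\partial M$, the field $X$ extends smoothly across the horizon and restricts there to $s_0$ times the unit normal pointing into $\Omega$. Integrating over the closed hypersurface $\Sigma$ the pointwise identity $\div_\Sigma X^{\top} = (n-1)f - \HHH\,\langle X, \nu\rangle$, which is immediate from $\nabla X = f\,g$, yields the Minkowski identity
\begin{equation*}
    \int_\Sigma \HHH\,\langle X, \nu\rangle\, d\sigma \,=\, (n-1)\int_\Sigma f\, d\sigma ,
\end{equation*}
while the divergence theorem applied to $X$ on $\Omega$, whose boundary is $\Sigma \sqcup \partial M$, gives $\int_\Sigma \langle X,\nu\rangle\,d\sigma = n\int_\Omega f\,d\mu + s_0\,\abs{\partial M} > 0$. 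Since $\Sigma$ lies in the interior $\{f > 0\}$, the right-hand side of the Minkowski identity is strictly positive as well; if $\HHH \equiv \HHH_0$, that identity becomes $\HHH_0 \bigl(n\int_\Omega f\,d\mu + s_0\,\abs{\partial M}\bigr) = (n-1)\int_\Sigma f\,d\sigma$, forcing $\HHH_0 > 0$. Hence $\Sigma$ is strictly mean-convex; equality in \cref{eq:HK-intro} being granted, \cref{thm:warped-intro} then applies and yields $\Sigma = \{s = c\}$ for some $c \in (s_0, \overline{s})$.

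The one delicate step is the positivity of $\HHH_0$: constant mean curvature a priori permits $\HHH_0 \le 0$ for the chosen orientation, and excluding this is exactly why one must return to the Minkowski identity rather than merely invoke the rigidity of the equality case in \cref{eq:HK-intro}. Everything else is a verbatim application of \cref{thm:warped-intro}.
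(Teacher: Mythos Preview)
Your proposal is correct and follows exactly the route the paper takes: as the paragraph preceding \cref{cor:alexandrov} indicates, the paper simply invokes Brendle's observation \cite[Section~6]{brendle-alexandrov} that the Minkowski identity forces a CMC hypersurface to saturate \cref{eq:HK-intro}, and then applies \cref{thm:warped-intro}. You additionally spell out why the constant mean curvature is strictly positive for the outward orientation---a hypothesis of \cref{thm:warped-intro} that the paper leaves implicit---and your derivation of $\HHH_0>0$ from the Minkowski identity combined with the divergence theorem on $\Omega$ is correct.
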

As clarified with additional details in \cref{subsec:brendle-comparison}, the substatic warped products of the form \cref{eq:warpedform-intro} correspond precisely to the family of warped products considered in the Alexandrov-type Theorem \cite[Theorem 1.1]{brendle-alexandrov}. 
On the other hand, such result was proved under an additional assumption, (H4), substantially prescribing the Ricci curvature being smallest in the radial direction. 
While  (H4) is verified on a number of known model solutions, such as the Schwarz\-schild--de Sitter and Reissner--Nordstr\"om manifolds mentioned as applications in~\cite{brendle-alexandrov}, there are important examples where (H4) does not hold. 
Indeed, the Schwarz\-schild--Anti de Sitter warped product
\begin{equation}
\label{eq:antidesitterhyperbolic}
M=[s_0,+\infty)\times N\,,\quad g=\frac{ds\otimes ds}{f^2}+s^2 g_N\,,\quad f=\sqrt{-1+s^2-2ms^{2-n}}\,,
\end{equation}
with cross-section $N$ satisfying $\Ric_{g_N}\geq -(n-2)g_N$ and such that $-\sqrt{(n-2)^{n-2}/{n^n}}<m\leq 0$ is a substatic manifolds with horizon boundary that does not satisfy (H4). In the special case where $\Ric_{g_N}= -(n-2)g_N$, the warped product~\eqref{eq:antidesitterhyperbolic} is a well known vacuum static solution of the Einstein Fields Equations that has been investigated to some extent in the literature (see e.g.~\cite{borghini-negativemass,Chrusciel_Simon} and references therein) and constituted the model for the Lee-Neves Riemannian Penrose Inequality \cite{lee-neves}.
 \cref{cor:alexandrov}, stemming from our novel proof, allows to fully drop the extra hypothesis (H4), hence in particular also applies, for example, to the metric \cref{eq:antidesitterhyperbolic}.

 \subsection*{Further directions and remarks} 
 We conclude mentioning, without any attempt to be complete, a couple of papers where the extra assumption \cref{eq:continualbordo-intro} or (H4) is added in connection with \cite{brendle-alexandrov} and \cite{Li_Xia_17}. Namely, in \cite{scheuer-xia}, the authors provided a quantitative version of Brendle's Alexandrov Theorem by exploiting the alternative proof through elliptic techniques devised in \cite{Li_Xia_17}; consequently they assume \cref{eq:continualbordo-intro}. In \cite{maggi-santilli}, a far-reaching nonsmooth version of Brendle's geodesic flow technique is worked out, leading to a characterization of sets with finite perimeter with a distributional notion of constant mean curvature; as in \cite{brendle-alexandrov}, (H4) is assumed, or some suitable weaker variant (see  \cite[Proposition 3.7]{fogagnolo-pinamonti} and \cite[Remark 1.1]{maggi-santilli}). It may then be fruitful to elaborate on our arguments leading to \cref{thm:warped-intro}, based on the combination of the elliptic and geodesic flow techniques, in order to go beyond \cref{eq:continualbordo-intro} and (H4) also in these kinds of more technical results.  

 \subsection*{Structure of the paper} 
 In \cref{sec:warped} we provide the results of an elementary but, to our knowledge, not yet available analysis of warped product manifolds, and furnish a comparison with Brendle's set of assumptions (H1)-(H4). In \cref{sec:HK-warped} we prove a generalized version of \cref{thm:rigiditybaby}, where hypersurfaces $\Sigma$ with several connected components as well as disconnected horizons are taken into account. In \cref{sec:HK-brendle} we prove \cref{thm:warped-intro}, and deduce \cref{cor:alexandrov}. We conclude the work with an Appendix containing the proofs of the computational results gathered in \cref{sec:warped}.

\subsection*{Acknowledgements} 
This work was initiated during a visit of A. P. at the Centro De Giorgi in Pisa. He warmly thanks the institution for the excellent working conditions offered.
Part of this work has been carried out while S. B. and M. F. were attending the \emph{Thematic Program on Nonsmooth Riemannian and Lorentzian Geometry} that took place at the Fields Institute in Toronto. 
They warmly thank the staff, the organizers and the colleagues for the wonderful atmosphere and the excellent working conditions set up there.

During the preparation of the work, M. F. was supported by the European Union – NextGenerationEU and by the University of Padova under the 2021 STARS Grants@Unipd programme ``QuASAR". 

The authors are members of Gruppo Nazionale per l’Analisi Matematica, la Probabilit\`a e le loro Applicazioni (GNAMPA), which is part of the Istituto
Nazionale di Alta Matematica (INdAM), and are partially funded by the GNAMPA project ``Problemi al bordo e applicazioni geometriche".

A.P. and S.B. are also supported by MIUR and the University of Trento, Italy.

The authors are grateful to Luciano Mari, Lorenzo Mazzieri, Mario Santilli,  Alessandro Savo and Mingxuan Yang 
for the many useful conversations had during the preparation of the work, that substantially helped to improve the quality.

\section{Substatic warped products}
\label{sec:warped}

We consider warped product manifolds
\begin{equation}
\label{eq:generalwarped}
    (I \times N, dr \otimes dr + h^2(r) g_{N}),
\end{equation}
for $(N,g_N)$ a closed $(n-1)$-dimensional Riemannian manifold,
and $I = [0, \overline{r})$, with $\overline{r} > 0$. We will always assume that $\{r=0\}$  is either a horizon boundary or a single point representing the origin of the polar coordinates. Of course, in the latter case, in order for the metric to be smooth at the point with $r=0$, the cross section $(N,g_N)$ must be homothetic to a round sphere. 

When $\dot h\neq 0$, we will find convenient to write warped products also in the equivalent form $(ds\otimes ds)/\dot h(r)^2+s^2 g_N$, for $s=h(r)$. Since we will see in \cref{prop:char-warped} and \cref{lem:splittingdesitter} below that the models we are interested in satisfy $f=\dot h$,
an advantage of this form is that the metric now depends directly on the substatic potential $f$, without the need of the auxiliary warping function $h$. Furthermore, more importantly, the new coordinate $s$ allows to write the function $f$ and the metric $g$ in the explicit form~\eqref{eq:formabuona}.

\begin{proposition}
\label{prop:char-warped}
Let $(M, g)$ be a substatic warped product of the form \cref{eq:generalwarped} with positive nondecreasing $h$, with either a empty boundary or a horizon boundary. 
If the substatic potential $f$ is a function of the coordinate $r$ only and
\begin{equation}
\label{eq:fricciradiale nullo}
    \left[f \Ric - \nabla \nabla f + \Delta f g\right](\nabla r, \nabla r) = 0
\end{equation}
then, up to multiplying $f$ and/or $g_N$ by a positive constant, 
the manifold $(M,g)$ and the substatic potential $f$ satisfy one of the following:
\begin{itemize}

\item[$(i)$] there exists $c\in\R$ such that $f=f(r)$ satisfies $\ddot f+(n-2)c f\geq 0$ and
\begin{equation}
\label{eq:prodotto}
g=dr\otimes dr+\,g_N\,,\quad \Ric_{g_N}\geq (n-2)c g_N\,,
\end{equation}

\item[$(ii)$] there exist $c\in\R$ and a function $\eta:[h(\bar r)^{-n},h(0)^{-n}]\to\R$ with $\eta''\geq 0$ such that
\begin{equation}
    \label{eq:formabuona}
g=\frac{ds\otimes ds}{f^2(s)}+s^2\,g_N\,,\quad \Ric_{g_N}\geq (n-2)c g_N\,,\quad
f\,=\,\sqrt{c+s^2\eta(s^{-n})}
\,.
\end{equation}
In particular, $s=h(r)$ and $f(s)=\dot h(r)$.
\end{itemize}
\end{proposition}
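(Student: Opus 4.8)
The plan is to start from the single scalar equation \eqref{eq:fricciradiale nullo} and turn it into an ODE for the warping function $h$ coupled with a curvature condition on the cross section. First I would compute, for a warped product metric $g = dr\otimes dr + h(r)^2 g_N$ with $f = f(r)$, the relevant components of the substatic Ricci tensor. The standard warped-product formulas give $\Ric(\partial_r,\partial_r) = -(n-1)\ddot h/h$, and for tangent directions $X$ to $N$ one has $\Ric(X,X) = \Ric_{g_N}(X,X) - \big(h\ddot h + (n-2)\dot h^2\big)g_N(X,X)$. Likewise $\nabla\nabla f(\partial_r,\partial_r) = \ddot f$, $\nabla\nabla f(X,X) = \dot f\,\dot h\,h\,g_N(X,X)$ on tangent directions, and $\Delta f = \ddot f + (n-1)\dot f \dot h / h$. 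Plugging the radial direction into \eqref{eq:fricciradiale nullo} yields, after cancellation, a relation of the form $\dot f \dot h/h - \ddot h f/h = $ (something)$\cdot$(combination that must vanish); carefully bookkeeping the terms one gets that \eqref{eq:fricciradiale nullo} is equivalent to an identity relating $\dot f$, $\ddot h$, and $f$. I expect this to collapse precisely to $(\dot f h - f \dot h)\dot{} = 0$ up to a factor, i.e.\ $\dot f h - f \dot h \equiv \text{const}$, which after the normalization and the boundary behaviour at $r=0$ forces $f = \dot h$ (this is the identity quoted in the statement as $f(s) = \dot h(r)$).

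Once $f = \dot h$ is established, I would split into the two cases according to whether $\dot h$ vanishes identically on some interval or not, equivalently whether $h$ is linear. If $\dot h$ is constant (necessarily after rescaling $h \equiv r + \text{const}$, but since $h$ is nondecreasing and either $\{r=0\}$ is a horizon or an origin, the only way $f=\dot h$ can be consistent with $f\geq 0$ vanishing on the horizon is the degenerate-constant situation handled separately) — more precisely, the alternative is: either $\ddot h \equiv 0$, giving case $(i)$ with $h$ constant so that after rescaling $g = dr\otimes dr + g_N$ and $f = f(r)$ a free function, or $\dot h$ is somewhere nonzero, in which case one changes variables to $s = h(r)$. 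In case $(i)$, the remaining tangential part of the substatic inequality \eqref{eq:substatic-intro} (which we have \emph{not} yet used, since \eqref{eq:fricciradiale nullo} is only the radial component being zero) must be exploited: it reads $f\big(\Ric_{g_N}(X,X) - (n-2)\dot h^2 g_N(X,X)\big) - \dot f\dot h h\,g_N(X,X) + \Delta f\, g_N(X,X)\, h^2 \geq 0$; with $h\equiv 1$, $\dot h = 0$ this simplifies to $f\,\Ric_{g_N} + \ddot f\, g_N \geq 0$, and choosing the constant $c$ appropriately one extracts both $\Ric_{g_N}\geq (n-2)c\,g_N$ and $\ddot f + (n-2)c f \geq 0$ (the splitting of a tensor inequality into a "scalar part $\times g_N$" plus a "Ricci part" is the standard trick: take $c$ to be the best constant such that $\Ric_{g_N}\geq (n-2)c\,g_N$ and feed it back).

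In case $(ii)$, with $s = h(r)$ and $f = \dot h$, I would rewrite the metric as $g = ds\otimes ds/f(s)^2 + s^2 g_N$ (this is exactly the coordinate change flagged before \eqref{eq:generalwarped}) and then again use the tangential component of \eqref{eq:substatic-intro}. Writing $f = f(s)$, the tangential substatic condition becomes an ODE inequality for $f^2$ as a function of $s$. The claim is that it is equivalent to the existence of a \emph{convex} function $\eta$ with $f^2 = c + s^2\eta(s^{-n})$ and $\Ric_{g_N}\geq (n-2)c\,g_N$. The way I would reach this: set $\psi = f^2$; the tangential inequality, after dividing by $g_N(X,X)$ and using the best-constant $c$ for the cross section, reduces to a linear second-order differential inequality in $\psi$ whose "homogeneous solutions" are spanned by $1$ and $s^{2}\cdot(\text{power of }s)$ — concretely one finds that the operator annihilating $\psi - c$ is, after the substitution $t = s^{-n}$, just $\frac{d^2}{dt^2}$ acting on $(\psi-c)/s^2$; hence $(\psi - c)/s^2 = \eta(t)$ with $\eta'' \geq 0$, i.e.\ $\eta$ convex on the interval $[h(\bar r)^{-n}, h(0)^{-n}]$. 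The main obstacle, and the step demanding the most care, is precisely this last reduction: getting the change of variables $t = s^{-n}$ to diagonalize the differential inequality and confirming that the convexity of $\eta$ is the exact translation of the tangential substatic condition (no spurious boundary terms, correct orientation of the inequality, correct handling of the horizon endpoint where $f\to 0$). This is a routine but delicate ODE computation, which is why the paper defers it to the Appendix; in the plan I would isolate it as a lemma "the tangential substatic condition $\iff$ $\eta''\geq 0$" and prove the proposition modulo that lemma.
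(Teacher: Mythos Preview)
Your overall architecture matches the paper's: compute the radial component of the substatic Ricci tensor, use it to pin down $f$ in terms of $h$, then split into the cylinder versus genuine warped case and translate the tangential substatic inequality into the convexity condition $\eta''\ge 0$ via the substitution $t=s^{-n}$. The tangential analysis you sketch is exactly what the paper does (their $\psi=(\dot h^2-c)/h^2$ is your $(\psi-c)/s^2$, and $(h^{n+1}\psi')'\ge 0$ is precisely $\eta''\ge 0$).

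There is, however, a concrete error in the radial step that would derail the argument if carried out as written. With the formulas you list, the radial component is
\[
[f\Ric-\nabla\nabla f+\Delta f\,g](\partial_r,\partial_r)
=-(n-1)\frac{\ddot h}{h}f-\ddot f+\Big(\ddot f+(n-1)\frac{\dot h}{h}\dot f\Big)
=(n-1)\,\frac{\dot f\,\dot h-f\,\ddot h}{h}\,,
\]
so \eqref{eq:fricciradiale nullo} is $\dot f\,\dot h=f\,\ddot h$, i.e.\ $(f/\dot h)'=0$ when $\dot h\neq 0$. Your guessed relation $(\dot f\,h-f\,\dot h)\dot{}=0$ expands to $\ddot f\,h=f\,\ddot h$, which is a different ODE (satisfied for instance by $f=h$) and does \emph{not} force $f\propto\dot h$; the boundary argument you invoke would not rescue this.

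Relatedly, the case split must come \emph{before} you conclude $f=\dot h$. When $\dot h\equiv 0$ the radial equation is vacuous and $f$ remains a free function of $r$; this is exactly case $(i)$, and nothing forces $f$ to be constant there. Your dichotomy ``$\ddot h\equiv 0$ versus $\dot h$ somewhere nonzero'' should read ``$\dot h\equiv 0$ versus $\dot h$ somewhere nonzero''; in the latter case $f=k\dot h$ together with $f>0$ in the interior gives $\dot h>0$ throughout, and the normalization absorbs $k$. Once these two points are fixed, your plan coincides with the paper's proof.
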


We point out that the family of warped products considered by Brendle \cite{brendle-alexandrov} correspond to the family in $(ii)$ above, see \cref{subsec:brendle-comparison} for more details on this.


In the proof of \cref{thm:warped-intro} we are going to exploit the following strengthening of Proposition~\ref{prop:char-warped}, in force when the substatic Ricci tensor vanishes in an additional direction.
\begin{lemma}
\label{lem:splittingdesitter}
Under the assumptions of \cref{prop:char-warped}, if we also assume that $h$ is not constant and that for every $t\in[a,b]$, $a,b\in\R$ there exist $x\in N$ and a nontrivial $X\in T_xN\subset T_{(t,x)}M$ such that
\begin{equation}
\label{eq:friccitan nullo}
\left[f \Ric - \nabla \nabla f + \Delta f g\right](X, X) = 0\,,
\end{equation}
then, up to multiplying $f$ by a positive constant, in the domain $[a,b]\times N$ the metric $g$ and the function $f$ have the form
\begin{equation}
\label{eq:splittingdesitter}
g=\frac{ds\otimes ds}{f^2}+s^2\,g_N\,,\quad \Ric_{g_{N}}\geq (n-2)c g_N\,,\quad
f\,=\,\sqrt{c-\lambda\,s^2-2m\,s^{2-n}}, 
\end{equation}
    where $f = \dot{h}$ and $\lambda, m \in \R$.
\end{lemma}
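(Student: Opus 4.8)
The plan is to run the dichotomy of \cref{prop:char-warped} and show that the extra hypothesis \cref{eq:friccitan nullo} forces the profile $\eta$ of case $(ii)$ to be affine. First I would reduce to case $(ii)$: since the assumptions of \cref{prop:char-warped} are in force, $(M,g)$ is of one of the two listed types, but case $(i)$ makes $g$ a constant multiple of the Riemannian product $dr\otimes dr + g_N$, i.e. $h$ constant, which is excluded here. Hence we are in case $(ii)$, so after the normalizations of \cref{prop:char-warped} we have, on all of $M$, $s=h(r)$, $f=\dot h$, and
\[
g=\frac{ds\otimes ds}{f^2(s)}+s^2 g_N\,,\qquad \Ric_{g_N}\geq (n-2)c\,g_N\,,\qquad f^2(s)=c+s^2\eta(s^{-n})\,,\qquad \eta''\geq 0\,.
\]

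Next I would expand the substatic tensor $Q:=f\Ric-\nabla\nabla f+\Delta f\,g$ on directions tangent to $N$. The warped-product computations of \cref{sec:warped} (detailed in the Appendix) give, for $X\in T_xN\subset T_{(t,x)}M$ and $s=h(t)$, an identity of the form
\[
Q(X,X)=f\,(\Ric_{g_N}-(n-2)c\,g_N)(X,X)\;+\;\frac{n^2}{2}\,s^{2-2n}\,\dot h\;\eta''(s^{-n})\;g_N(X,X)\,,
\]
a sum of two separately nonnegative terms: the first because $\Ric_{g_N}\geq(n-2)cg_N$ and $f\geq0$, the second because $\eta''\geq0$, $s=h>0$ and $\dot h=f\geq0$. (This is exactly the identity behind the substaticity of such warped products.) The key structural point is that the second term does not depend on the direction $X$, only on the slice parameter $t$.

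Then I would invoke \cref{eq:friccitan nullo}: for each $t\in[a,b]$ there is a nontrivial $X$ with $Q(X,X)=0$, so both nonnegative summands above vanish and in particular $\dot h(t)\,\eta''(h(t)^{-n})=0$. Away from the horizon the substatic potential is strictly positive, hence $\dot h=f>0$ there; this forces $\eta''(h(t)^{-n})=0$ for all interior $t$ and makes $h$ strictly increasing, so that $t\mapsto h(t)^{-n}$ ranges over a nondegenerate interval. Therefore $\eta''\equiv 0$ on that interval (hence on its closure, by continuity), i.e. $\eta(u)=-\lambda-2m\,u$ for some $\lambda,m\in\R$. Substituting back gives $f^2(s)=c+s^2(-\lambda-2m\,s^{-n})=c-\lambda s^2-2m\,s^{2-n}$ for $s\in h([a,b])$, which together with the metric form and the bound $\Ric_{g_N}\geq(n-2)cg_N$ from case $(ii)$ is precisely \cref{eq:splittingdesitter} on $[a,b]\times N$, up to the rescaling of $f$ allowed in the statement.

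The main obstacle will be the precise $N$-direction expansion of $Q$ above: mere nonnegativity $Q\geq0$ (substaticity) is not sufficient, one needs the structural fact that the part of $Q(X,X)$ not accounted for by $\Ric_{g_N}-(n-2)cg_N$ is an \emph{explicit} nonnegative multiple of $\eta''(s^{-n})$, since it is this that allows the pointwise vanishing in \cref{eq:friccitan nullo} to be upgraded to $\eta''\equiv 0$ on a whole interval. The remaining points — excluding case $(i)$, the strict monotonicity of $h$ off the horizon, and the passage from $\eta''\equiv0$ to the explicit form of $f$ — are routine.
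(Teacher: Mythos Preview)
Your proof is correct and follows essentially the same route as the paper: reduce to case $(ii)$ of \cref{prop:char-warped}, observe that the tangential component of $Q$ splits as the sum of two nonnegative terms, one of which is a positive multiple of $\eta''(s^{-n})$, and conclude from \cref{eq:friccitan nullo} that $\eta''\equiv 0$ on the relevant interval, hence $\eta$ is affine. The paper's proof is terser (it just says ``the second inequality in \cref{eq:substatic_wp} is saturated; retracing the computations one deduces $\eta''=0$''), while you make the two-summand decomposition and the role of $\dot h>0$ in the interior explicit, but the underlying argument is identical.
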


The proofs of \cref{prop:char-warped} and \cref{lem:splittingdesitter} involve elementary but lengthy calculations and have been included in the Appendix.

\begin{remark}
The potential $f$ given in \cref{eq:splittingdesitter} coincides with that of the de Sitter/Anti de Sitter--Schwarzschild manifold. When the cross-section is Einstein, these are known to be, {together with cylinders}, the only \emph{static} warped product manifolds with compact horizon boundary, that is, with vanishing substatic Ricci tensor (see~\cite{kobayashi} or~\cite[Section~2.2]{stefano}). \cref{lem:splittingdesitter} constitutes thus a more general warped product classification result.
\end{remark}

It is worth discussing in some detail the regularity of $\eta$ at the horizon boundary. Let $s_0$ be the value of $s$ corresponding to the horizon. We can write $\eta$ in terms of $f$ as $\eta(s^{-n})=s^{-2}(f^2-c)$, hence in particular the value $\eta(s_0^{-n})=-cs_0^{-2}$ at the boundary is finite. We can also show that $\eta'(s_0^{-n})$ is well defined. In fact, we can easily compute 
\[
|\na f|=f(s)f'(s)=s\eta(s^{-n})-\frac{n}{2}s^{1-n}\eta'(s^{-n})\,,
\]
hence the regularity of $\eta'$ up to the boundary follows from the smoothness of $f$ up to the boundary. On the other hand, there does not seem to be an easy way to show the regularity of the second and higher derivatives of $\eta$ up to the boundary. 
This is the very issue that in the end does not allow us to infer that \cref{eq:continualbordo-intro} holds in the warped product case. We are able to show that it holds under the assumption that $\eta$ is $C^{2, \alpha}$ up to the boundary.

\begin{lemma}
\label{lem:desittersoddisfa}
Let $(M, g)$ be a substatic warped product of the form~\eqref{eq:formabuona}. If the function $\eta$ appearing in~\eqref{eq:formabuona}  is $C^{2,\alpha}$ up to the boundary then  $\nabla \nabla f/f \in C^{0,\alpha}(M \cup \partial M) 
$. 
\end{lemma}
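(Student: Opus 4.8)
The plan is to express $f$, and hence $\nabla\nabla f$ and $\nabla\nabla f/f$, explicitly in the coordinate $s$ on the warped product~\eqref{eq:formabuona}, and then read off the H\"older regularity up to the horizon directly from the assumed $C^{2,\alpha}$ regularity of $\eta$. First I would recall from \cref{prop:char-warped} that on $(M,g)$ of the form~\eqref{eq:formabuona} one has $f(s)=\sqrt{c+s^2\eta(s^{-n})}$ with $f=\dot h$ and $s=h(r)$. Computing the Hessian of $f$ on such a warped product reduces to two ingredients: the radial-radial component $\nabla\nabla f(\partial_s,\partial_s)$, which involves $f''(s)$ and thus two derivatives of $\eta$, and the tangential components $\nabla\nabla f(X,X)$ for $X$ tangent to the fibre $N$, which by the warped-product Christoffel symbols involve only $f'(s)$ and hence at most one derivative of $\eta$. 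All of these are then divided by $f$.

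The key steps, in order: (1) write down $f$, $f'$, $f''$ in terms of $\eta$, $\eta'$, $\eta''$ evaluated at $s^{-n}$, noting that the chain rule produces powers of $s$ (which are smooth and bounded away from $0$ near the horizon $s=s_0>0$) times derivatives of $\eta$; in particular $f''$ is a smooth function of $s$ times a combination of $\eta$, $\eta'$, $\eta''$, so $f''\in C^{0,\alpha}$ up to the boundary exactly when $\eta\in C^{2,\alpha}$. (2) Express the components of $\nabla\nabla f$ in the coordinate frame: the $(\partial_s,\partial_s)$ component is essentially $f''$ up to a smooth factor and up to correction terms built from $f'$ and the metric coefficient $f^{-2}$, all of which are at least as regular; the mixed components vanish by the warped-product structure; the fibre components are, up to smooth factors in $s$, equal to $s f(s) f'(s)$ times $g_N$ (this is consistent with the displayed formula $|\nabla f| = f f' = s\eta - \tfrac n2 s^{1-n}\eta'$ already computed in the excerpt), hence only involve $\eta$ and $\eta'$ and are therefore $C^{1,\alpha}$, a fortiori $C^{0,\alpha}$. (3) Divide by $f$: since $f$ is smooth on $M\cup\partial M$ and vanishes on $\partial M$, one must check that the ratio $\nabla\nabla f/f$ does not blow up. Here the substatic condition~\eqref{eq:substatic-intro} together with $\partial M=\{f=0\}$ being a regular minimal level set forces the relevant components of $\nabla\nabla f$ to vanish on $\partial M$ at a matching rate; concretely, $|\nabla f|\neq 0$ on $\partial M$ and $f''(s_0)$ together with the known identity for $|\nabla f|$ at $s_0$ combine so that $\nabla\nabla f/f$ extends continuously, and then the $C^{0,\alpha}$ bound on the quotient follows from the quotient rule for H\"older functions once the denominator is comparable to the distance to $\partial M$, which holds because $\nabla f\neq 0$ there.

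The main obstacle I anticipate is precisely step (3): controlling the quotient $\nabla\nabla f/f$ uniformly up to $\partial M$, where the numerator and denominator both degenerate. The honest way to handle this is to change to a coordinate adapted to the horizon — for instance, the $g$-distance $t$ to $\partial M$, in which $f$ behaves like $|\nabla f|_{\partial M}\, t$ to leading order — and to verify that each component of $\nabla\nabla f$, computed above, is $O(t)$ with the appropriate $C^{0,\alpha}$-in-$t$ remainder. Since $ds = f\,dt$ (up to sign), derivatives in $s$ convert to derivatives in $t$ with factors of $f^{-1}$, and one has to track carefully that the $\eta''$-terms entering $f''$, after the division by $f$, remain bounded; this is exactly where $\eta\in C^{2,\alpha}$ (and no less) is used, and where the stated obstruction to pushing beyond~\eqref{eq:continualbordo-intro} without this hypothesis becomes visible. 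Everything else is the routine warped-product Hessian computation, which I would relegate to the Appendix alongside the proofs of \cref{prop:char-warped} and \cref{lem:splittingdesitter}.
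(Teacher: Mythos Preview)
Your step~(1) contains a genuine error: the claim that $f''\in C^{0,\alpha}$ up to the boundary is false. Since $f^2=c+s^2\eta(s^{-n})$ one has $f'=(f^2)'/(2f)$, and $(f^2)'(s_0)=2|\nabla f|_{\partial M}\neq 0$ while $f(s_0)=0$; hence $f'(s)\to\infty$ as $s\to s_0$, and $f''$ blows up as well. So $f'$ and $f''$ are \emph{not} individually H\"older up to the horizon, and the ``obstacle'' you anticipate in step~(3) is not merely a $0/0$ indeterminacy but an $\infty/\infty$ one in your chosen frame.

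The paper avoids this entirely by a single observation you are missing: the components of $\nabla\nabla f/f$ in the \emph{orthonormal} frame $(\partial_r,\,\cdot\,)$, with $dr=ds/f$, are exactly
\[
(f')^2+ff''=\tfrac12\,(f^2)'' \qquad\text{and}\qquad s\,ff'=\tfrac{s}{2}\,(f^2)'.
\]
Since $f^2=c+s^2\eta(s^{-n})$ is an explicit smooth function of $s$ built from $\eta$, these are immediately seen to equal
\[
\eta+\tfrac{n(n-3)}{2}s^{-n}\eta'+\tfrac{n^2}{2}s^{-2n}\eta'',
\qquad
s^2\eta-\tfrac{n}{2}s^{2-n}\eta',
\]
which are manifestly $C^{0,\alpha}$ up to $s=s_0>0$ once $\eta\in C^{2,\alpha}$. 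No division by $f$, no substatic condition, no matching-rate argument, and no coordinate change to the distance $t$ is needed: the apparent singularity is an artifact of tracking $f'$ and $f''$ separately rather than the combinations $\tfrac12(f^2)'$ and $\tfrac12(f^2)''$ that actually appear. Your outline can be repaired by making exactly this substitution, after which steps~(2) and~(3) collapse to a two-line computation.
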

\begin{proof}

{

We compute
\begin{equation}
\frac{\nabla \nabla f}{f} = \left(\frac{{f'(s)}^{2}}{f(s)^2} + \frac{f''(s)}{f(s)}\right) ds \otimes ds \,+\, s f(s) f'(s) g_N  = \left[{f'(s)}^{2} + f''(s) f(s)\right] dr \otimes dr + s f(s) f'(s) g_N.
\end{equation}
Moreover, for any function $f$ having the form~\eqref{eq:formabuona}, it holds
\[
sf(s)f'(s)\,=\,s^2\eta(s^{-n})-\frac{n}{2}s^{2-n}\eta'(s^{-n})\,,
\]
\[
{f'(s)}^{2} + f''(s) f(s)\,=\,\eta(s^{-n})+\frac{n(n-3)}{2}s^{-n}\eta'(s^{-n})+\frac{n^2}{2}s^{-2n}\eta''(s^{-n})\,.
\]
Since $s=s_0>0$ at the horizon, from these computations we immediately see how the assumed regularity of $\eta$ implies that of $\nabla \nabla f / f$ up to the boundary $\partial M$. 
}
\end{proof}

\begin{remark}
\label{rem:desittersoddisfaremark}
The warped products in \cref{eq:splittingdesitter} satisfy $\eta(t) = -\lambda -2mt$, hence $\eta'' = 0$. In particular, if such splitting takes place up to the horizon boundary, then \cref{lem:desittersoddisfa} implies that \cref{eq:continualbordo-intro} holds.
\end{remark}

\subsection{Comparison with Brendle's assumptions.}
\label{subsec:brendle-comparison}
In the case of nonempty boundary, in~\cite{brendle-alexandrov}, warped products of the form \cref{eq:generalwarped} with $\Ric_{g_N} \geq (n-2) c g_N$ for some $c\in\R$ are assumed to satisfy the following set of assumptions.
\begin{itemize}
    \item[(H1)] $\dot{h}(0) = 0$, $\ddot{h}(0) > 0$,
    \item[(H2)] $\dot h(r) > 0$ for any $r \in (0, \overline{r})$
    \item[(H3)] The function
    \begin{equation}
\label{eq:h2}
       F(r) =  2 \frac{\ddot{h}(r)}{h(r)} -(n-2)  \frac{c -\dot h(r)^2}{h^2(r)}
    \end{equation}
    is nondecreasing,
    \item[(H4)] It holds
    \[
    \frac{\ddot h(r)}{h(r)}+\frac{c-\dot h(r)^2}{h(r)^2}\,>\,0\,.
    \]
\end{itemize}
Assumptions (H1)-(H3) correspond precisely to case $(ii)$ in \cref{prop:char-warped}. Indeed, since $f = \dot{h}(r)$, assumption (H1) implies that $\{f= 0\}$ coincides with the boundary $\partial M$, that its mean curvature $\HHH=(n-1)\dot h(0)/h(0)$ vanishes and that $|\na f|=\ddot h(0)\neq 0$ at the boundary. In other words, (H1) is equivalent to the request that the boundary is of horizon type.
(H2) entails the request that $f$ is positive in the interior of $M$, while (H3) is instead equivalent to the substaticity of $g$, with substatic potential $f = \dot{h}$, as shown in \cite[Proposition 2.1]{brendle-alexandrov}. It is finally worth pointing out that, remarkably, for a warped product \cref{eq:generalwarped}, equation \cref{eq:fricciradiale nullo} is \emph{always} satisfied for $f = \dot{h}$. This is due to the following formula, contained in the proof of \cite[Proposition 2.1]{brendle-alexandrov}
\begin{equation}
\label{eq:fricciwarped}
f \Ric - \nabla \nabla f + \Delta f g =  f(\Ric_{g_N} - (n-2) c g_N) + \frac{1}{2}\dot{h}(r)^2 \dot F(r) g_N.
  \end{equation}

\smallskip

As pointed out in the introduction, we will never need (H4) in our analysis. For warped products having the form~\eqref{eq:formabuona},  (H4) is equivalent to $sf(s)f'(s)>-c+f(s)^2$. Substituting the formula for $f$ into~\eqref{eq:formabuona} this is in turn equivalent to $\eta'<0$. In particular, the model solutions described in~\eqref{eq:splittingdesitter} satisfy (H4) if and only if $m>0$.


\section{Warped product splitting of substatic manifolds}
\label{sec:HK-warped}
In this section, we are going to prove the following result, more general than \cref{thm:rigiditybaby}, since it deals with possibly disconnected hypersurfaces and explicitly treats the case of components of $\Sigma$ that are homologous to a point. In particular, it fully encompasses the case of an ambient $M$ with empty boundary.
\begin{theorem}
\label{thm:rigiditygeneral}
    Let $(M, g)$ be a substatic Riemannian manifold with possibly empty horizon boundary $\partial M$ such that the substatic potential $f$ satisfies 
    \begin{equation}
    \label{eq:continualbordo}
        \frac{\nabla \nabla f}{f} \in C^{0, \alpha}(M \cup \partial M)
    \end{equation}
    for $\alpha \in (0, 1)$.
    Let $\Sigma$ be a smooth \emph{strictly mean-convex} hypersurface homologous to a possibly empty union $N = N_1 \sqcup \dots \sqcup N_l$ of connected components of $\partial M$. Let $\Sigma_1, \dots, \Sigma_k$, $k \geq l$, the  connected components of $\Sigma$. Assume that, for $1 \leq j \leq l$, each  $\Sigma_j$ is homologous to the component $N_j$ of $\partial M$, while for $j > l$ $\Sigma_j$ is null-homologous. Let $\Omega_j$ be the connected region enclosed by $\Sigma_j$ and $N_j$ if $1 \leq j \leq l$, and the connected region enclosed by $\Sigma_j$ if $l < j \leq k$. Let also $f_j$ the restriction of $f$ on $\Omega_j$.  Then, the Heintze-Karcher inequality \cref{eq:HK-intro} holds with equality if and only  if the following hold.
    \begin{itemize}
  \item[$(i)$] For $1\leq j\leq l$, $(\overline{\Omega_j}, g)$  is isometric to
    \begin{equation}
\label{eq:rigidity_intro-homo}
       \left([s_0^j, s_1^j] \times N_j, \frac{ds \otimes ds}{f_j(s)^2} + s^2 g_{N_j}\right).
    \end{equation}
    \item[$(ii)$] For $l < j \leq k$ , $(\overline{\Omega_j}, g)$ is isometric to
    \begin{equation}
\label{eq:rigidity_intro-etero}
        \left([0, s_1^j] \times N_j, \frac{ds \otimes ds}{f_j(s)^2} + \Big(\frac{s}{f_j(0)}\Big)^2 g_{\Sf^{n-1}}\right).
    \end{equation}
\item[$(iii)$] {We have $f_1(s_0^1)f_1'(s_0^1)/s_0^1=\dots =f_l(s_0^l)f_l'(s_0^l)/s_0^l$.}
\end{itemize}
\end{theorem}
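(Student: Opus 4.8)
The plan is to revisit the elliptic proof of the sharpened Heintze--Karcher inequality \eqref{eq:HK-intro} given in \cite{fogagnolo-pinamonti} (building on \cite{Li_Xia_17}) and to track the equality case through every step. That proof solves on $\Omega$ the weighted torsion-type mixed boundary value problem with Dirichlet datum $u=0$ on $\Sigma$, Robin datum $\langle\nabla u,\nu\rangle=c_{\partial M}$ (up to normalization) on $\partial M$, and weight $f$; assumption \cref{eq:continualbordo} is precisely what guarantees that $u\in C^{2,\alpha}$ up to the degenerate horizon $\{f=0\}$, so that the substatic Reilly formula and all the boundary integrals below are meaningful there and the divergence theorem may be applied on $\partial M$. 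Feeding $u$ into that Reilly identity and combining it with the Newton inequality $|\nabla\nabla u|^{2}\ge (\Delta u)^{2}/n$, with substaticity \cref{eq:substatic-intro}, and with the specific value of $c_{\partial M}$ from \cref{eq:c-intro}, one reconstructs \eqref{eq:HK-intro} as a chain of inequalities; equality in \eqref{eq:HK-intro} then forces each of them to be saturated.

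I would first record the resulting infinitesimal rigidity: on each $\Omega_j$ one gets $\nabla\nabla u=\tfrac{\Delta u}{n}\,g$, the vanishing $[f\Ric-\nabla\nabla f+\Delta f\,g](\nabla u,\nabla u)=0$, and, on $\Sigma_j$, that $\Sigma_j$ is totally umbilic with $\langle\nabla u,\nu\rangle$ a constant multiple of $\HHH^{-1}$ — hence $\Sigma_j$ has constant mean curvature and is a level set of $u$. From $\nabla\nabla u=\psi g$ with $\psi=\Delta u/n$, the classical Obata-type argument gives that $|\nabla u|$ is constant on the level sets of $u$ and that $(\overline{\Omega_j},g)$ is a warped product $dt\otimes dt+\varphi(t)^{2}g_{L}$ over a level set $L$ of $u$ (with $L$ diffeomorphic to $N_j$). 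The identity $\Ric(\nabla u,\cdot)=(n-1)\nabla\psi$, together with the non-negativity of the substatic tensor and its vanishing on $(\nabla u,\nabla u)$, upgrades the latter to $[f\Ric-\nabla\nabla f+\Delta f\,g](\nabla u,\cdot)\equiv 0$; analysing this in the warped coordinates, and using $f\equiv 0$ on $N_j$ when $1\le j\le l$ (resp. the smoothness of $f$ at the interior critical point of $u$ when $j>l$) together with the positivity of the substatic tensor, one deduces that $f$ depends only on the warping coordinate $t$. At this point \cref{eq:fricciradiale nullo} holds automatically — it is precisely the vanishing in the $\nabla u=\partial_t$ direction — so \cref{prop:char-warped} applies to $\Omega_j$: its case $(i)$ is excluded because its slices are totally geodesic whereas $\Sigma_j$ is strictly mean-convex, and case $(ii)$ is exactly the form \eqref{eq:rigidity_intro-homo}; when a critical point of $u$ is present the cross section is forced to be round and one lands on \eqref{eq:rigidity_intro-etero}. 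Since $\Sigma_j=\{u=0\}$ and $N_j=\{f=0\}$ are level sets of the warping coordinate, they are the two ends $\{s=s_{1}^{j}\}$ and $\{s=s_{0}^{j}\}$.

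Condition $(iii)$ is read off from the boundary term on $\partial M$ in the Reilly formula — the place where the constant $c_{\partial M}$ is produced. Because the horizon is minimal, the usual mean-curvature term drops out and the surviving contribution is governed by $B:=\tfrac{\Delta f}{f}-\tfrac{\nabla\nabla f}{f}\!\left(\tfrac{\nabla f}{|\nabla f|},\tfrac{\nabla f}{|\nabla f|}\right)$; the definition \cref{eq:c-intro} of $c_{\partial M}$ as an $|\nabla f|$-weighted average of $B$ over $\partial M$ is rigged precisely so that saturating the inequality forces $B$ to be $|\nabla f|$-a.e. constant on $\partial M$. On a slice of the warped product \eqref{eq:rigidity_intro-homo} a short computation gives $B=(n-1)\,f f'/s$, whose limit at the horizon $s=s_{0}^{j}$ is the finite quantity $(n-1)\,f_j(s_0^j)f_j'(s_0^j)/s_0^j$; constancy of $B$ across $N_1,\dots,N_l$ is then exactly $(iii)$. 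The converse implication — that every $(M,g)$ of the listed form realises equality in \eqref{eq:HK-intro} — follows from a direct Minkowski-identity computation as in \cite[Section 6]{brendle-alexandrov}, or by exhibiting the explicit solution $u=u(s)$ of the torsion problem.

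I expect the main obstacle to be the boundary analysis at the degenerate horizon $\{f=0\}$: making the weighted Reilly formula and the ensuing equality discussion rigorous up to $\partial M$ is precisely where the sharp regularity $u\in C^{2,\alpha}$ afforded by \cref{eq:continualbordo} is used, and it is also where the matching condition $(iii)$ has to be extracted. A second delicate point is the passage from $\nabla\nabla u\propto g$ and the one-directional vanishing of the substatic tensor to the full statement that $f$ is a function of the warping coordinate alone — the hypothesis under which \cref{prop:char-warped} becomes applicable.
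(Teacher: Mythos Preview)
Your proposal has a genuine gap at the very first step: the equality conditions you record are not the ones that the substatic Reilly formula of \cite{Li_Xia_17,fogagnolo-pinamonti} actually produces. You claim that saturation of \eqref{eq:HK-intro} yields $\nabla\nabla u=\tfrac{\Delta u}{n}\,g$ and $[f\Ric-\nabla\nabla f+\Delta f\,g](\nabla u,\nabla u)=0$. In fact the nonnegative integrand in that Reilly identity is
\[
\Bigl|\nabla\nabla u-\tfrac{\Delta u}{n}g-u\Bigl(\tfrac{\nabla\nabla f}{f}-\tfrac{\Delta f}{nf}g\Bigr)\Bigr|^{2}
\;+\;Q\Bigl(\nabla u-\tfrac{u}{f}\nabla f,\;\nabla u-\tfrac{u}{f}\nabla f\Bigr),
\]
so the equality case gives the \emph{twisted} conditions \eqref{eq:confhess} and \eqref{eq:friccinunullo}, not the untwisted ones you wrote. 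Since $u$ does not vanish in the interior (and equals $c_N\neq 0$ on $N$), the correction term $u(\nabla\nabla f/f-\Delta f/(nf)\,g)$ is genuinely present, and your Obata argument for $u$ with respect to $g$ does not get off the ground. Relatedly, the boundary problem \eqref{eq:torsion-pb} is Dirichlet on $N$ with $u=c_N$, not Robin as you state.

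The paper handles this by passing to the conformal metric $\tilde g=f^{-2}g$ and to $\phi=u/f$: condition \eqref{eq:confhess} becomes exactly $\tilde\nabla\tilde\nabla\phi=\tfrac{\tilde\Delta\phi}{n}\tilde g$, which \emph{does} give a warped splitting of $(\Omega,\tilde g)$, and \eqref{eq:friccinunullo} becomes $T(\nabla\rho,\nabla\rho)=0$ for the warping coordinate $\rho$. The passage from ``$f$ radial'' being a hope to a fact is then not automatic: the polarization $T\ge 0$, $T(\nabla\rho,\nabla\rho)=0\Rightarrow T_{i\rho}=0$ gives a first-order PDE for $f$ along $\rho$, and one must integrate it and use the boundary behaviour (with $|\nabla f|$ constant on each $N_j$, a consequence of \eqref{eq:continualbordo}) to kill the tangential dependence of $f$. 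Only then does \cref{prop:char-warped} apply. Your sketch ``analysing this in the warped coordinates \dots one deduces that $f$ depends only on $t$'' hides precisely this work, and is moreover carried out in the wrong gauge. Finally, condition $(iii)$ in the paper is obtained not from constancy of $B$ across $\partial M$, but from the observation that each $\Omega_j$ saturates \eqref{eq:HK-intro} on its own, so that $c_N\!\int_N|\nabla f|=\sum_j c_{N_j}\!\int_{N_j}|\nabla f|$; an elementary Cauchy--Schwarz computation with the explicit warped-product values of $c_N,c_{N_j}$ then forces the ratios $f_j(s_0^j)f_j'(s_0^j)/s_0^j$ to coincide.
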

To prove \cref{thm:rigiditygeneral} we start from the full statement of the Heintze-Karcher inequality \cref{eq:HK-intro}, given in \cite[Theorem 3.6]{fogagnolo-pinamonti}. It involves the solution $u$ to the boundary value problem
\begin{equation}
\label{eq:torsion-pb}    
\begin{cases}
\Delta u = - 1 + \frac{\Delta f}{f} u & \mbox{in} \,\, \Omega 
\\
\,\,\,\,\,u=c_N & \mbox{on}\,\, N
\\
\,\,\,\,\,u = 0 &\mbox{on} \,\, \Sigma,
\end{cases}
\end{equation}
where $\partial \Omega = \Sigma \sqcup N$, with $N$ union of connected hypersurfaces $N_1 \sqcup \dots \sqcup N_l$, $l \in \N$ and $c_N$ is the constant given by \cref{eq:c-intro}.
It reads
\begin{equation}
\begin{split}
\frac{n-1}{n} \int_{\Sigma}\frac{f}{\HHH} d\sigma -\int_{\Omega} f d\mu  - \sum_{j \in J} c_j \int_{N_j} \abs{\nabla f} d\sigma
 \, \geq \, \,  \frac{n}{n-1}   \int_\Omega   &\bigg|\nabla \nabla u - \frac{\Delta u}{n} g - u\left(\frac{\nabla \nabla f}{f}  - \frac{\Delta f}{nf} g\right)\bigg\vert^2 \\
 &+ Q\left(\nabla u - \frac{u}{f} \nabla f, \nabla u - \frac{u}{f} \nabla f\right) d\mu,
 \end{split}
 \end{equation}
where
\begin{equation}
    Q = f \Ric - \nabla \nabla f + \Delta f g.
\end{equation}
It yields in particular the following. 
\begin{lemma}
\label{lem:confhess}
    Let $(M, g)$ be a substatic Riemannian manifold with horizon boundary satisfying \cref{eq:continualbordo}, and let $\Sigma$ be a smooth hypersurface homologous to $N = N_1 \sqcup \dots \sqcup \dots N_l$ for $l \in \N$. Let $c_N$ be given by \cref{eq:c-intro}, and let $u$ be the solution to \cref{eq:torsion-pb}. Then, if equality holds in \cref{eq:HK-intro}, then 
    \begin{equation}
        \label{eq:confhess}
        \nabla \nabla u - \frac{\Delta u}{n}g - u \left(\frac{\nabla \nabla f}{f} - \frac{\Delta f}{nf} g\right) = 0 
    \end{equation}
    and 
    \begin{equation}
    \label{eq:friccinunullo}
        [f \Ric - \nabla \nabla f + \Delta f g]\left(\nabla u - \frac{u}{f}\nabla f, \nabla u - \frac{u}{f}\nabla f\right) = 0
    \end{equation}
    in $\overline{\Omega} \setminus N$.
\end{lemma}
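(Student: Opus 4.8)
The plan is to read off both identities directly from the refined Heintze--Karcher inequality displayed just above, which is \cite[Theorem 3.6]{fogagnolo-pinamonti}: that inequality says precisely that the difference between the left- and right-hand sides of \cref{eq:HK-intro} dominates
\[
\frac{n}{n-1}\int_{\Omega} \left| \nabla\nabla u - \frac{\Delta u}{n}\, g - u\left(\frac{\nabla\nabla f}{f} - \frac{\Delta f}{nf}\,g\right) \right|^{2} + Q\!\left(\nabla u - \frac{u}{f}\nabla f,\ \nabla u - \frac{u}{f}\nabla f\right) d\mu ,
\]
with $Q = f\,\Ric - \nabla\nabla f + \Delta f\, g$. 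Hence the standing hypothesis that \cref{eq:HK-intro} holds with equality forces this integral to be nonpositive.

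Next I would note that the integrand above is pointwise nonnegative: the constant $n/(n-1)$ is positive, the squared term is manifestly $\ge 0$, and the substatic condition \cref{eq:substatic-intro} says exactly that $Q \ge 0$ as a symmetric bilinear form, so that $Q(W,W) \ge 0$ for $W := \nabla u - (u/f)\nabla f$. A nonnegative integrable function with nonpositive integral vanishes almost everywhere, and so does each of its two nonnegative summands separately; thus both \cref{eq:confhess} and \cref{eq:friccinunullo} hold at $\mu$-almost every point of $\Omega$.

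It then remains to upgrade these identities from ``almost everywhere in $\Omega$'' to ``everywhere in $\overline{\Omega}\setminus N$''. The potential $u$ solves the elliptic boundary value problem \cref{eq:torsion-pb}; away from the horizon $N = \{f = 0\}$ this equation is uniformly non-degenerate, its zeroth-order coefficient $\Delta f/f$ is smooth in $\Omega$, and by \cref{eq:continualbordo} the tensor $\nabla\nabla f/f$ entering \cref{eq:confhess} is $C^{0,\alpha}$ up to $\partial M$. Combined with the smoothness of $\Sigma$ and of the Dirichlet datum $u=0$ on $\Sigma$, interior and boundary Schauder estimates give $u \in C^{2,\alpha}_{\mathrm{loc}}(\Omega \cup \Sigma)$. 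Consequently the two tensor fields appearing on the left of \cref{eq:confhess} and \cref{eq:friccinunullo} are continuous on $\overline{\Omega}\setminus N$, and vanishing almost everywhere there becomes vanishing everywhere there.

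The bulk of the work is thus already done in \cite{fogagnolo-pinamonti}; the one point deserving attention is the last regularity step, where one must make sure that all coefficients appearing in \cref{eq:confhess}--\cref{eq:friccinunullo} are continuous on $\overline{\Omega}\setminus N$ --- which is precisely the role of hypothesis \cref{eq:continualbordo}. No difficulty arises near the smooth hypersurface $\Sigma$, while the horizon $N$ is deliberately left out of the conclusion because the coefficient $\Delta f/f$ degenerates there.
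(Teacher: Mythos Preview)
Your proposal is correct and follows exactly the approach the paper takes: the paper presents \cref{lem:confhess} with the words ``It yields in particular the following'' immediately after displaying the refined inequality from \cite[Theorem 3.6]{fogagnolo-pinamonti}, treating the lemma as a direct consequence without spelling out a proof. You have simply filled in the obvious details---nonnegativity of the integrand by substaticity, vanishing of the integral under equality, and the continuity argument (via Schauder regularity of $u$ and assumption \cref{eq:continualbordo}) needed to pass from almost-everywhere to everywhere on $\overline{\Omega}\setminus N$.
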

The following basic yet fundamental observation provides a conformal warped product splitting for the metric in $\Omega$. It exploits \eqref{eq:confhess} only. In the remainder of this section, \emph{we  agree to define $N_j = \emptyset$ when $j > l$}.
\begin{lemma}
\label{lem:confsplit}
In the assumptions and notations of \cref{thm:rigiditygeneral}, 
let $\phi = u/f$ on $\overline{\Omega} \setminus N$. Then, there exists a coordinate $\rho$ on $\overline{\Omega_j} \setminus N_j$ such that $\phi$ depends on $\rho$ alone and $\overline{\Omega_j} \setminus N_j$ splits as $[0, \overline{\rho}_j) \times \Sigma_j$ endowed with the metric 
\begin{equation}
\label{eq:confsplit}
g_j = f^2_j \left(d\rho \otimes d\rho + \frac{\phi_j^2(\rho)}{f_j^2(0, \theta)} g_{\Sigma_j}\right),
\end{equation}
where $g_{\Sigma_j}$ is the metric induced on $\Sigma_j$ by $g$, $\phi_j$ is the restriction of $\phi$ on $\overline{\Omega_j}\setminus N_j$,  and $\theta = (\theta^1, \dots, \theta^{n-1})$ are coordinates on $\Sigma_j$. Moreover, $\overline{\rho}_j = +\infty$ for $1\leq j \leq l$, and finite for $l< j \leq k$.
\end{lemma}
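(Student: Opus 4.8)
The plan is to exploit the conformal Hessian identity \cref{eq:confhess} alone. First I would rewrite \cref{eq:confhess} entirely in terms of $\phi = u/f$. Since $\nabla\nabla u = \nabla\nabla(f\phi) = \phi\,\nabla\nabla f + f\,\nabla\nabla\phi + \nabla f \otimes \nabla\phi + \nabla\phi\otimes\nabla f$ and $\Delta u = \phi\,\Delta f + f\,\Delta\phi + 2\langle\nabla f,\nabla\phi\rangle$, substituting into \cref{eq:confhess} the terms involving $\phi\,\nabla\nabla f$ and $\phi\,\Delta f\, g$ cancel precisely against the $u(\nabla\nabla f/f - \Delta f g/(nf))$ term, leaving a tracefree equation of the schematic form $f\,\nabla\nabla\phi + \nabla f\otimes\nabla\phi + \nabla\phi\otimes\nabla f = (\text{multiple of } g)$. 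Dividing by $f$, this says exactly that the symmetric $2$-tensor $\nabla\nabla\phi + (\nabla\log f \otimes\nabla\phi + \nabla\phi\otimes\nabla\log f)$ is pointwise proportional to $g$; equivalently, writing $\bar g = f^2 g$, one recognizes this combination (after the standard conformal transformation law for the Hessian) as the $\bar g$-Hessian $\overline{\nabla}\,\overline{\nabla}\phi$ being a multiple of $\bar g$. So the key reduction is: $\phi$ has $\bar g$-Hessian proportional to the metric $\bar g$ on $\overline{\Omega_j}\setminus N_j$.

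Next I would invoke the classical Obata-type/Brinkmann structure theorem: on a connected Riemannian manifold a nonconstant function whose Hessian is a pointwise multiple of the metric forces the manifold to split as a warped product $[0,\bar\rho)\times L$ with metric $d\rho\otimes d\rho + \psi(\rho)^2 g_L$, with $\phi$ depending on $\rho$ only and $L$ a level set of $\phi$. Here I must check $\phi$ is nonconstant: on $\Sigma$ we have $u=0$ hence $\phi=0$, while $\phi$ cannot vanish identically since $u$ solves \cref{eq:torsion-pb} with $\Delta u = -1 + (\Delta f/f)u$, so $u\not\equiv 0$ near $\Sigma$ by the strict mean-convexity / Hopf lemma (this also pins down the sign and the fact that $\Sigma$ is a regular level set $\{\rho=0\}$ of $\phi$, which is why $\Sigma_j$ appears as the cross-section). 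Applying this to $\bar g$ on each connected component $\overline{\Omega_j}\setminus N_j$ gives a coordinate $\rho$, a splitting $[0,\bar\rho_j)\times\Sigma_j$, and $\bar g_j = d\rho\otimes d\rho + \psi_j(\rho)^2 g_{\Sigma_j,\bar g}$ where $g_{\Sigma_j,\bar g} = f^2|_\Sigma \, g_{\Sigma_j}$ is the metric induced by $\bar g$ on the level set $\{\rho=0\}=\Sigma_j$. Since on $\Sigma_j$ we have $\phi=0$ and the $\rho$-derivative of $\phi$ is nonzero, after reparametrizing $\rho$ one can normalize $\psi_j = \phi_j$ (both vanish-free quantities proportional along the flow); a short ODE comparison between $\psi_j$ and $\phi_j$ using the Obata equation identifies them up to the normalization absorbed into $\rho$. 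Converting back, $g_j = f_j^{-2}\bar g_j = f_j^{-2}\big(d\rho\otimes d\rho + \phi_j(\rho)^2 f^2|_{\Sigma_j} g_{\Sigma_j}\big)$; pulling the constant (in $\rho$) factor $f^2|_{\Sigma_j}(\theta) = f_j^2(0,\theta)$ through yields exactly \cref{eq:confsplit}.

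Finally, the behaviour of $\bar\rho_j$: the flow of $\phi$ runs from $\Sigma_j$, where $\phi=0$, in the direction of increasing $\phi$. If $1\le j\le l$, the orbit terminates at $N_j$ where $f=0$, i.e. the conformal factor $f^{-2}$ blows up; because the coordinate $\rho$ here is the $\bar g = f^2 g$-arclength and $f^2$ vanishes at $N_j$ while $g$ stays smooth, the $\bar g$-distance to $N_j$ is infinite, so $\bar\rho_j = +\infty$. (Concretely, near $N_j$, $|\nabla f|_g\neq 0$ so $f$ behaves like the $g$-distance $t$ to $N_j$, and $\int \frac{dt}{f}\sim\int\frac{dt}{t}=+\infty$.) If $l<j\le k$, the orbit instead reaches an interior critical point of $\phi$ — the ``origin'' of the ball $\Omega_j$ — which is at finite $\bar g$-distance, so $\bar\rho_j<+\infty$; smoothness of $\bar g$ at that point forces the cross-section to be a round sphere, anticipating case $(ii)$ of \cref{thm:rigiditygeneral} but for the present lemma we only record $\bar\rho_j<\infty$.

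The main obstacle I expect is the careful handling at the horizon $N_j$: the change of variables degenerates there (the coordinate $\rho$ is defined only on $\overline{\Omega_j}\setminus N_j$, which is why the statement excises $N_j$), and one must argue — via $f\sim\dist_g(\cdot,N_j)$ from regularity of the level set and $|\nabla f|\neq 0$ — that the $\bar g$-length of the orbit up to $N_j$ diverges, so that $N_j$ sits ``at infinity'' in the splitting coordinates. A secondary technical point is verifying that $\phi$ is genuinely nonconstant and that its gradient does not vanish on $\Sigma_j$, which is where the strict mean-convexity hypothesis and the structure of the torsion problem \cref{eq:torsion-pb} enter; and that \cref{eq:continualbordo} is what guarantees $\nabla\nabla f/f$ — hence the whole identity \cref{eq:confhess} — is regular enough up to $\partial M$ for the ODE analysis along the flow to be valid.
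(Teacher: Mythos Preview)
Your overall strategy coincides with the paper's: rewrite \cref{eq:confhess} as a tracefree Hessian condition on $\phi = u/f$ with respect to a conformal metric, then invoke the Tashiro/Cheeger--Colding warped splitting, using the Hopf lemma on $\Sigma$ to ensure $\phi$ is nonconstant with $\Sigma$ a regular level set. However, you have the conformal factor inverted, and this is not a harmless typo.

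The combination you correctly derive,
\[
\nabla\nabla\phi + d\log f\otimes d\phi + d\phi\otimes d\log f \ \propto\ g,
\]
is the tracefree $\tilde g$-Hessian condition for $\tilde g = f^{-2}g$, \emph{not} for $\bar g = f^{2}g$. Under $\tilde g = e^{2\psi}g$ one has $\tilde\nabla\tilde\nabla\phi = \nabla\nabla\phi - d\psi\otimes d\phi - d\phi\otimes d\psi + \langle\nabla\psi,\nabla\phi\rangle_g\, g$, so the \emph{plus} signs above force $\psi = -\log f$. With your choice $\bar g = f^{2}g$ the cross terms would enter with the opposite sign and the identification fails. The paper indeed works with $\tilde g = f^{-2}g$.

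The error propagates. Writing $g = f^{-2}\bar g$ you land on $g_j = f_j^{-2}(d\rho\otimes d\rho + \cdots)$, which is not \cref{eq:confsplit}; the correct $g = f^{2}\tilde g$ is what produces the $f_j^{2}$ prefactor in the statement. More tellingly, your own argument for $\bar\rho_j = +\infty$ is inconsistent with $\bar g = f^{2}g$: with that choice the $\bar g$-arclength element near $N_j$ is $f\,dt \sim t\,dt$, giving a \emph{finite} distance to the horizon. The divergent integral $\int dt/f$ you write is precisely the $f^{-2}g$-arclength. So your final paragraph already implicitly uses the correct conformal metric; once you replace $\bar g = f^{2}g$ by $\tilde g = f^{-2}g$ throughout and redo the back-substitution, your proof aligns with the paper's.
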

\begin{proof}
We focus on a single $\overline{\Omega_j} \setminus N_j$, and drop for notational convenience the subscript $j$.

Consider the conformal metric $\tilde{g} = f^{-2} g$. 
Then, it is readily checked that \eqref{eq:confhess} is equivalent to  
\begin{equation}
    \label{eq:confhessgtilde}
    \nabla\nabla_{\tilde{g}} \, \phi - \frac{\Delta_{\tilde{g}} \phi}{n} \, \tilde{g} = 0
\end{equation}
in $\overline{\Omega} \setminus N$. 
We recall moreover that a Hopf Lemma holds for $u$ on $\Sigma$ \cite[Theorem 2.3]{fogagnolo-pinamonti}, and consequently $\Sigma$ is a regular level set for $\phi$. Then,
by classical results (\cite{tashiro}, see otherwise \cite[Section 1]{cheeger-colding-lowerbounds} or \cite{catino-mantegazza-mazzieri}), there exists a coordinate $\rho$ such that $\phi$ is a function of $\rho$ alone and such that $(\overline{\Omega}, \tilde{g})$ splits as $[0, \overline{\rho}) \times \Sigma$ endowed with
\begin{equation}
\label{eq:splittingtildeg}
    \tilde{g} = d\rho \otimes d\rho + \frac{\phi^2(\rho)}{\phi^2(0)} \tilde{g}_{\Sigma},
\end{equation}
with $\overline{\rho}$ being infinite if (and only if) $N$ is nonempty. In \cref{eq:splittingtildeg},  $\tilde{g}_{\Sigma}$ is the metric induced on $\Sigma$ by $\tilde{g}$. This proves \cref{eq:confsplit}.
\end{proof}
Before providing the proof of \cref{thm:rigiditygeneral}, we point out, as another fundamental, although almost trivial, consequence of the assumption \cref{eq:continualbordo}, that $\abs{\nabla f}$ is constant on each connected component of $\partial M$.

\begin{lemma}
\label{lem:gradf-constant}
Let $(M, g)$ be a substatic Riemannian manifold with potential $f$ and nonempty horizon boundary. Assume that $(\nabla \nabla f)/f$ is continuous up to the boundary $\partial M$. Then, $\abs{\nabla f}$ is constant on each connected component of $\partial M$. 
\end{lemma}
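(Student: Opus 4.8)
The plan is to extract from the continuity hypothesis on $\nabla\nabla f/f$ the stronger fact that the \emph{full} Hessian of $f$ vanishes along $\partial M$, and then to read off the constancy of $\abs{\nabla f}$ from a one-line computation of its tangential derivative along the horizon.

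First I would note that, $f$ being smooth on $M\cup\partial M$, the Hessian $\nabla\nabla f$ is continuous up to $\partial M$. In the interior of $M$ one has the identity $\nabla\nabla f = f\cdot\big(\tfrac{\nabla\nabla f}{f}\big)$; letting a point approach $\partial M$, the factor $\nabla\nabla f/f$ stays bounded (it even converges, by \cref{eq:continualbordo}) while $f\to 0$, so $\nabla\nabla f\to 0$. By continuity this forces $\nabla\nabla f \equiv 0$ on $\partial M$. Next, I would fix a connected component $N_0$ of $\partial M$ and a tangent vector $X\in T_pN_0$ at a point $p\in N_0$, and compute, using the definition of the Hessian,
\[
X\big(\abs{\nabla f}^2\big) \,=\, 2\,\nabla\nabla f\,(X,\nabla f)\,,
\]
whose right-hand side vanishes at $p$ by the previous step. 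Hence the restriction of $\abs{\nabla f}^2$ to $N_0$ has identically vanishing differential, so it is constant on $N_0$; since $\partial M$ is a regular level set of $f$, $\nabla f\neq 0$ there and $\abs{\nabla f}$ is a smooth positive function on $N_0$, so it too is constant on $N_0$, as claimed. (One can equivalently phrase the vanishing of $\nabla\nabla f(X,\nabla f)$ directly: it equals $f\cdot\tfrac{\nabla\nabla f}{f}(X,\nabla f)=0$ on $\partial M$ because $f=0$ there.)

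I do not expect any genuine obstacle here. The only point requiring a little care is the passage from ``$\nabla\nabla f/f$ continuous up to $\partial M$'' to ``$\nabla\nabla f = 0$ on $\partial M$'', which uses the a priori smoothness of $f$ (hence of $\nabla\nabla f$) up to the horizon; the rest is a direct computation.
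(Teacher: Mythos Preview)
Your proof is correct and follows essentially the same approach as the paper's: both deduce $\nabla\nabla f=0$ on $\partial M$ from the continuity of $\nabla\nabla f/f$ together with $f=0$ there, and then use the identity $X(|\nabla f|^2)=2\nabla\nabla f(X,\nabla f)$ to conclude constancy on each connected component. If anything, you spell out the limiting argument for $\nabla\nabla f\to 0$ a bit more explicitly than the paper does.
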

\begin{proof}
    Let $N$ be a connected component of $\partial M$, and $X$ any vector field in $TN$. We have
\begin{equation}
       \label{eq:hessfrel}
        \langle \nabla \abs{\nabla f}^2 , X\rangle = 2 \nabla \nabla f(\nabla f, X).
\end{equation}
    on $N$. On the other hand, since $\nabla\nabla f /f$ is continuous up to the boundary $\{f=0\}$, we necessarily have $\nabla \nabla f = 0$ on $N$. By \eqref{eq:hessfrel}, we conclude that $\abs{\nabla f}$ is constant on $N$. 
\end{proof}
In order to complete the proof of \cref{thm:rigiditygeneral}, we are substantially left to prove that the substatic potential $f$ depends on $\rho$ alone. This information, plugged into \cref{eq:confsplit}, implies that $(\overline{\Omega}, g)$ is in fact a warped product, and the conclusion follows from \cref{prop:char-warped}. To achieve this goal, we are going to exploit \cref{eq:friccinunullo}.

\smallskip

\begin{proof}[Proof of \cref{thm:rigiditygeneral}]
Again, we drop the dependency on $j$. We
    consider again  the conformal setting $\tilde{g} = f^{-2} g$.
    Observe that the substatic Ricci tensor just translates into
    \begin{equation}
\label{eq:friccinonconf-conf}
   f\Ric - \nabla \nabla f + \Delta f g  =    f {\Ric}_{\tilde{g}} - (n-1) \nabla\nabla_{\tilde{g}} f + 2(n-1)\frac{df \otimes df}{f}.
    \end{equation}
    Let $T$ be the tensor in the right-hand side above, and observe that substaticity amounts to $T \geq 0$. Moreover, letting again $\phi = u/f$, 
    the condition \cref{eq:friccinunullo}  reads
    \begin{equation}
\label{eq:friccinununulloconf}
       T (\nabla \phi, \nabla \phi) =  (\phi'(\rho))^2 T (\nabla \rho, \nabla \rho) = 0,
    \end{equation}
     where $\nabla \phi =\phi'(\rho) \nabla \rho$ is due to $\phi$ being a function of $\rho$ alone, by \cref{lem:confhess}.

Inspired by the proof of \cite[Lemma 4.6]{benatti2022minkowski}, consider now, for $\theta^i$, $i \in \{1, \dots, n-1\}$ a local coordinate on $\Sigma$ and $\lambda \in \R$, the vector field $Y_i = \nabla \rho + \lambda \partial_i$, where we denoted $\partial_i = \partial /\partial \theta^i$. The condition $T(Y_i, Y_i) \geq 0$, coupled with \eqref{eq:friccinununulloconf}, yields, at any fixed point $p \in \Omega$,
\begin{equation}
\label{eq:lucatrick}
    T(Y_i, Y_i) = 2\lambda T_{i\rho} + \lambda^2 T_{ii} \geq 0
\end{equation}
for any $\lambda \in \R$. This can actually happen only if $T_{i\rho} = 0$. Such condition reads
\begin{equation}
\label{eq:Tjrhonullo}
    (\nabla\nabla)^{\tilde{g}}_{i\rho} f = 2 \partial_\rho f \partial_j f.
\end{equation}
Computing the $\tilde{g}$-Hessian of $f$ from the expression \cref{eq:confsplit}, the above identity becomes
\begin{equation}
    \partial^2_{i\rho} f - \frac{\phi''(\rho)}{\phi'(\rho)} \partial_i f = 2 \frac{\partial_\rho f}{f}.
\end{equation}
One can now directly check that, as a consequence of the above relation, we have
\begin{equation}
    \partial_\rho \left(\frac{\partial_i f}{f^2 \phi'(\rho)}\right) = 0,
\end{equation}
so that, given $0 \leq \rho_1 < \rho_2 < \overline{\rho}$,
we get 
\begin{equation}
\label{eq:partialj1/f}
    \partial_i \left(\frac{1}{f}\right) (\rho_2, \theta) = \frac{\phi'(\rho_2)}{\phi'(\rho_1)} \partial_i \left(\frac{1}{f}\right) (\rho_1, \theta). 
\end{equation}
Integrating both sides along $\theta^i$ in $(\theta^i_0, \theta^i_1)$, and omitting to explicate the dependencies on $\theta^m$ for $m \neq i$, we finally get
\begin{equation}
\label{eq:identitafinale}
\frac{\phi'(\rho_2) f(\rho_2, \theta^i_0) - \phi'(\rho_2)f(\rho_2, \theta^i_1)}{(\phi'(\rho_2))^2f(\rho_2, \theta^i_0)f(\rho_2, \theta^i_1)} = \frac{\phi'(\rho_1) f(\rho_1, \theta^i_0) - \phi'(\rho_1)f(\rho_2, \theta^i_1)}{(\phi'(\rho_1))^2f(\rho_1, \theta^i_0)f(\rho_1, \theta^i_1)}.
\end{equation}
We are now going to show that the left hand side converges to $0$ as $\rho_2 \to \overline{\rho}$. Since we will need to tell between $N_j$ and $N = N_1 \sqcup \dots \sqcup N_l$, we restore the dependencies on $j$ when dealing with a specific connected component $N_j$ of $N$. Recall also that we say that $N_j$ is empty when $j > l$.  

If $N_j$ is empty, then the limit $\rho_2 \to \overline{\rho}$ corresponds to approaching a particular point $p$ in each connected component of $\Omega$, and so the numerator on the left hand side of \cref{eq:identitafinale} converges to zero, while the denominator stays bounded away from zero. 
To understand the case of a nonempty $N$,
recall first that $\phi = u/f$, and observe that $\abs{\nabla \rho} = 1/f$. Then, we have
\begin{equation}
\label{eq:fphi'}
    (f \phi')^2(\rho_2, \theta) = f^4 
    \left(\frac{\abs{\nabla u}^2}{f^2} + \frac{u^2}{f^4} \abs{\nabla f}^2 - 2\frac{u}{f^3} \langle\nabla u, \nabla f\rangle\right)(\rho_2, \theta),
\end{equation}
where all the quantities are understood in terms of $g$. Since $u$ attains smoothly the datum on $N_j$ (see \cite[Theorem 2.3]{fogagnolo-pinamonti}), and $f \to 0$ on $N_j$, that we are approaching as $\rho_2 \to \infty$, we deduce that the limit of the above quantity is given by $c_N^2 \abs{\nabla f}^2_{\mid N_j} > 0$, that crucially is constant by \cref{lem:gradf-constant}. Thus, again, this implies that the left hand side of \cref{eq:identitafinale} vanishes in the limit as $\rho_2 \to \infty$, since so does the numerator, while the denominator tends to $c_N^2 \abs{\nabla f}^2_{\mid N_j} > 0$. 

\smallskip

We conclude, finally, that the numerator of the right hand side of \cref{eq:identitafinale} vanishes for any $\rho_1 \in [0, \overline{\rho})$, implying that $f$ does not depend on $\theta^i$. Being $i \in \{1, \dots, n-1\}$ arbitrary, we deduce that $f$ depends on $\rho$ only. This also implies that $f$ depends on the $g$-distance from $\Sigma$ only, and thus allows to write $g$ on $\Omega$ as a warped product based at $N_j$ if this is nonempty, and on a spherical cross-section otherwise. 
We can thus invoke our characterization of substatic warped products \cref{prop:char-warped}, since \cref{eq:fricciradiale nullo} just amounts to \cref{eq:friccinununulloconf}. 
Moreover, since $f$ is also constant on $\Sigma$, the cylindrical situation \cref{eq:prodotto} cannot arise either, since this would imply that the mean curvature of $\Sigma$ is zero, against the assumption of strict mean-convexity. {We are thus left with \cref{eq:formabuona}. Since the above argument works for any $j$, it follows that every connected component $\Omega_j$ must have the structure prescribed by points $(i)$ and $(ii)$ of the theorem. 

To prove point $(iii)$, we start by observing that all pieces having the form $(i)$ or $(ii)$ also saturate the Heintze-Karcher inequality individually. Imposing equality in~\eqref{eq:HK-intro} in all connected components $\Omega_j$ for $1\leq j\leq l$ as well as equality in the whole domain $\Omega$, we deduce
\begin{equation}
    \label{eq:condcostanti_proof}
    c_N \int_N \abs{\nabla f} d\sigma = \sum_{j=1}^l c_{N_j} \int_{N_j} \abs{\nabla f} d\sigma.
\end{equation}
Using the explicit expression of the metric in the domain $\Omega_j$, provided by point $(ii)$ of the theorem proved above,  we also compute
\[
c_N\,=\,\frac{1}{n}\frac{\sum_{j=1}^l k_j|N_j|}{\sum_{j=1}^l\frac{k_j^2}{s_0^j}|N_j|}\,, \qquad c_{N_j}\,=\,\frac{1}{n}\frac{s_0^j}{k_j}\,,
\]
where $k_j$ is the (constant) value of $|\na f|$ on $N_j=\{s=s_0^j\}\cap\Omega_j$. More explicitly $k_j=f(s_0^j)f'(s_0^j)$. Equation~\eqref{eq:condcostanti_proof} can then be rewritten as
\[
\left(\sum_{j=1}^l k_j|N_j|\right)^2\,=\,\left(\sum_{j=1}^l\frac{k_j^2}{s_0^j}|N_j|\right)\left(\sum_{j=1}^l s_0^j|N_j|\right)\,.
\]
We now show that this equality forces $k_1/s_0^1=\dots =k_l/s_0^l$, concluding the proof of point $(iii)$ of the Theorem. To this end, we actually prove the following more general statement: given positive numbers $\alpha_1,\dots,\alpha_l,\beta_1,\dots,\beta_l$, if it holds
\[
\left(\sum_{j=1}^l\alpha_j\right)^2\,=\,\left(\sum_{j=1}^l\alpha_j\beta_j\right)\left(\sum_{j=1}^l\frac{\alpha_j}{\beta_j}\right)\,,
\]
then $\beta_1=\dots=\beta_l$. A way to show this is via a direct computation: expanding the above terms we have
\[
\sum_{j=1}^l\alpha_j^2+\sum_{i\neq j}\alpha_i\alpha_j\,=\,\sum_{j=1}^l\alpha_j^2+\sum_{i\neq j}\frac{\beta_i}{\beta_j}\alpha_i\alpha_j\,.
\]
Simplifying the equal terms on the left and right hand side and exploiting the symmetry in the indexes $i$ and $j$, the above formula gives
\[
2\sum_{i< j}\alpha_i\alpha_j\,=\,\sum_{i< j}\left(\frac{\beta_i}{\beta_j}+\frac{\beta_j}{\beta_i}\right)\alpha_i\alpha_j\,=\,\sum_{i< j}\frac{\beta_i^2+\beta_j^2}{\beta_i\beta_j}\alpha_i\alpha_j\,.
\]
Since $\beta_i^2+\beta_j^2\geq 2\beta_i\beta_j$, with equality if and only if $\beta_i=\beta_j$, the wished result follows at once.
}
\end{proof}

\section{Heintze-Karcher rigidity in substatic warped products}
\label{sec:HK-brendle}
We start by exploiting Brendle's monotonicity formula to deduce some useful geometric properties along an evolution of a hypersurface fulfilling the identity in \cref{eq:HK-intro}. We focus on the case of a nonempty horizon boundary $\partial M = N$ and of $\Sigma$ homologous to it; the case of $\Sigma$ null-homologous and in particular the case of an ambient $M$ with empty boundary is already fully encompassed by \cref{thm:rigiditygeneral}. 

\smallskip

Consider the conformal metric $\tilde{g} = f^{-2} g$, and let $\Omega_t = \{x \in \Omega \, | \, \rho(\Sigma, x) \geq t\}$, where $\rho$ is the $\tilde{g}$-distance, and $\Omega$ as above is the region enclosed between $\Sigma$ and $\partial M$. Let $\Sigma_t = \partial \Omega_t \setminus \partial M$. Crucially, the mean curvature of $\Sigma_t$ is easily seen to remain strictly positive if the initial $\Sigma$ is strictly mean-convex (see  \cite[Proposition 3.2]{brendle-alexandrov}).
Let 
\begin{equation}
\label{eq:Q(t)}
    Q(t) = \int\limits_{\Sigma_t} \frac{f}{\HHH} d\sigma,
\end{equation}
where all the integrated quantities are expressed in terms of the original metric $g$. Then, utilizing, as in \cite[Proposition 3.2]{brendle-alexandrov},
classical evolution equations (see e.g. \cite[Theorem 3.2]{Huisken_Polden}) for our normal flow of speed $f$, and plugging in the identity
\begin{equation}
\label{eq:laplacian-livelli}
    \Delta_{\Sigma_t} f = \Delta f - \nabla \nabla f (\nu, \nu) -\HHH (\nabla f, \nu)
\end{equation}
(again in terms of $g$), one gets
\begin{equation}
\label{eq:Q'(t)}
    Q'(t) = - \frac{n}{n-1} \int_{\Sigma_t} f^2 d\sigma - \int_{\Sigma_t} \left(\frac{f}{\HHH}\right)^2 \left[\abs{\mathring{\hhh}}^2 +  \left(\Ric - \frac{\nabla\nabla f}{f} +\frac{\Delta f}{f} g\right)(\nu, \nu)\right] d\sigma.
\end{equation}
Let now $t \in (0, \infty)$ be such that $\Sigma_\tau$ is smooth for any $\tau \in [0, t]$. One has, applying the coarea formula, 
\begin{equation}
\label{eq:formula-derivata-integrale}
\begin{split}
Q(0) -Q(t) &= -\int_{0}^t Q'(\tau) d\tau  \\
&= \frac{n}{n-1}\int\limits_{\Omega\setminus\overline{\Omega_t}} f d\mu \, + \int_0^\tau \int_{\Sigma_\tau}\left(\frac{f}{\HHH}\right)^2 \left[\abs{\mathring{\hhh}}^2 +  \left(\Ric - \frac{\nabla\nabla f}{f} +\frac{\Delta f}{f} g\right)(\nu, \nu)\right] d\sigma. 
\end{split}
\end{equation}
As long as $Q(t)$ is smooth, Brendle's Heintze-Karcher inequality \cite[Theorem 3.11]{brendle-alexandrov} states that 
\begin{equation}
\label{eq:HKQ(t)}
Q(t) \geq \frac{n}{n-1}\int_{\Omega_t} f d\mu + c_{N} \int_{\partial M} \abs{\nabla f} d\mu.
\end{equation}
Since equality holds in the Heintze-Karcher inequality for the initial $\Sigma$,
that is 
\begin{equation}
\label{eq:hkequalityQ(0)}
    Q(0) = \frac{n}{n-1}\int_{\Omega} f d\mu + c_{N} \int_{\partial M} \abs{\nabla f} d\mu,
\end{equation}
we get, applying \cref{eq:hkequalityQ(0)} and \cref{eq:HKQ(t)}  to the left hand side of \cref{eq:formula-derivata-integrale},
that
\begin{equation}
\label{eq:totumb+friccinullointegral}
    \int_{\Sigma_\tau}\left(\frac{f}{\HHH}\right)^2 \left[\abs{\mathring{\hhh}}^2 +  \left(\Ric - \frac{\nabla\nabla f}{f} +\frac{\Delta f}{f} g\right)(\nu, \nu)\right] d\sigma = 0
\end{equation}
for any $\tau \in [0, t]$. We deduce the following information on the evolution of $\Sigma$, as long as it remains smooth.
\begin{lemma}
\label{lem:totumb+friccinununullo}
Let $(M, g)$ be a substatic warped product of the form \cref{eq:formabuona}, with a nonempty connected horizon boundary $\partial M$. Let $\Sigma = \partial \Omega \setminus \partial M$ be a smooth, embedded, connected hypersurface homologous to $\partial M$, such that \cref{eq:HK-intro} holds with equality sign. Let $\Sigma_t = \{x \in \Omega \, | \, \rho(\Sigma, x) = t\}$, where $\rho$ is the distance in the conformal metric $\tilde{g} = f^{-2} g$. Then, $\Sigma_t$ is a totally umbilic hypersurface such that
\begin{equation}
\label{eq:friccisigmat}
[f \Ric - \nabla \nabla f + \Delta f g] (\nu, \nu) = 0,
\end{equation}
as long as $\Sigma_t$ evolves smoothly.
\end{lemma}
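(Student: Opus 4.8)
The decisive estimate has, in fact, already been assembled in the discussion preceding the statement: combining Brendle's monotonicity computation \cref{eq:Q'(t)}, the coarea identity \cref{eq:formula-derivata-integrale}, the Heintze--Karcher inequality \cref{eq:HKQ(t)} applied at time $t$, and the equality hypothesis \cref{eq:hkequalityQ(0)} at time $0$, one is led to \cref{eq:totumb+friccinullointegral}, namely
\[
\int_{\Sigma_\tau}\left(\frac{f}{\HHH}\right)^2 \left[\abs{\mathring{\hhh}}^2 +  \left(\Ric - \frac{\nabla\nabla f}{f} +\frac{\Delta f}{f} g\right)(\nu, \nu)\right] d\sigma = 0
\]
for every $\tau \in [0, t]$ --- valid exactly as long as $\Sigma_\tau$ stays a smooth embedded hypersurface, which is what licenses the evolution equations of \cite[Theorem 3.2]{Huisken_Polden} (as used in \cite[Proposition 3.2]{brendle-alexandrov}), the coarea formula, and the applicability of \cref{eq:HKQ(t)}. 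The plan is then simply to promote this vanishing of an integral to a pointwise identity.

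The key step is to notice that the integrand above is pointwise nonnegative on $\Sigma_\tau$. Indeed, $\abs{\mathring{\hhh}}^2 \geq 0$ trivially; next, by construction $\Sigma_\tau = \partial\Omega_\tau \setminus \partial M$ does not meet the horizon $\partial M = \{f = 0\}$, so $f > 0$ on $\Sigma_\tau$, and dividing the substatic inequality $f\Ric - \nabla\nabla f + \Delta f g \geq 0$ by $f$ gives $\big(\Ric - \tfrac{\nabla\nabla f}{f} + \tfrac{\Delta f}{f} g\big)(\nu,\nu) \geq 0$; finally $\Sigma_\tau$ is strictly mean-convex --- precisely the property recalled just above from \cite[Proposition 3.2]{brendle-alexandrov} --- so $(f/\HHH)^2$ is finite and strictly positive on $\Sigma_\tau$.

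It then follows that the integrand is a continuous nonnegative function with vanishing integral over $\Sigma_\tau$, hence identically zero; dividing by $(f/\HHH)^2 > 0$ I obtain $\abs{\mathring{\hhh}}^2 = 0$ and $\big(\Ric - \tfrac{\nabla\nabla f}{f} + \tfrac{\Delta f}{f} g\big)(\nu,\nu) = 0$ at every point of $\Sigma_\tau$, for all $\tau \in [0,t]$. The first is precisely total umbilicity of $\Sigma_\tau$, and multiplying the second through by $f > 0$ yields \cref{eq:friccisigmat}. There is no real obstacle internal to this lemma; the only point demanding care --- and the reason for the hypothesis that $\Sigma_t$ evolves smoothly --- is ensuring that the integral identities feeding into \cref{eq:totumb+friccinullointegral} are legitimate, the substantive work of deriving the monotonicity formula being imported from \cite{brendle-alexandrov}.
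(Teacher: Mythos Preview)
Your argument is correct and matches the paper's approach exactly: the paper presents the lemma as an immediate consequence of \cref{eq:totumb+friccinullointegral}, and you have simply spelled out the (standard) step that a nonnegative continuous integrand with vanishing integral must vanish pointwise, using strict mean-convexity and positivity of $f$ on $\Sigma_\tau$ to isolate the two summands.
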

We now illustrate how we are going to get \cref{thm:warped-intro}. We first show that $\Sigma_t$ remains smooth for all of its evolution, in \cref{prop:smoothevo}. This is fundamentally due to the total umbilicity of the evolution coupled with the Heintze-Karcher inequality itself, preventing the second fundamental form to blow up, see \cref{lem:secondbounded}. 
Then, we adapt to the substatic setting an argument of Montiel \cite{Montiel}, yielding in our case a very peculiar dichotomy: if a totally umbilic hypersurface satisfying \cref{eq:friccisigmat} is not a cross-section of the warped product, then a vector field $X$ tangent to $\partial M$ is found on the region spanned by $\Sigma$ such that the condition \cref{eq:fricciradiale nullo} in \cref{lem:splittingdesitter} is also satisfied (see \cref{prop:montiel}). But then, having showed that $\Omega$ is foliated by such hypersurfaces, this region must split as prescribed by \cref{eq:splittingdesitter}. However, as observed in \cref{rem:desittersoddisfaremark}, this metric satisfies \cref{eq:continualbordo}, and we conclude that the only possibility in the dichotomy is that in fact the initial $\Sigma$ was isometric to a cross-section.

\subsection{The $\tilde{g}$-flow remains smooth.}
In order to show that the second fundamental form does not blow up along a smooth evolution $\Sigma_t$ starting at a hypersurface $\Sigma$ fulfilling the equality in Heintze-Karcher, we first observe that the diameters remain bounded. In this subsection, we are always denoting with $\mathrm{C}_t$ some positive constant possibly depending on $t \in (0, +\infty)$. 
\begin{lemma}
\label{lem:boundeddiam}
In the assumptions of \cref{lem:totumb+friccinununullo}, let $t < +\infty$ be such that $\Sigma_{\tau}$ is smooth for any $\tau \in [0, t)$. 
Then, the metric $g_\tau$ induced by $g$ on $\Sigma_\tau$ satisfies
\begin{equation}
\label{eq:boundg}
|g_\tau| \leq \mathrm{C}_t\,,
\end{equation}
for any $\tau \in [0, t)$
where the norm of $g_\tau$ is induced by the norm of the diffeomorphic surface ${g}_{\Sigma_0}$. Moreover, the intrinsic diameter of $\Sigma_\tau$ satisfies
\begin{equation}
\label{eq:boundediameter}
\mathrm{diam}_{g_\tau}(\Sigma_\tau) \leq \mathrm{C}_t    \end{equation}
for any $\tau \in [0, t)$. 

Both \cref{eq:boundg} and \cref{eq:boundediameter} holds with $g_\tau$ replaced by $\tilde{g}_\tau$, corresponding to the metric induced by the underlying conformal metric $\tilde{g} = f^{-2} g$.
\end{lemma}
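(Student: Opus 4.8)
The plan is to control the evolving induced metric $g_\tau$ by extracting an ODE for it from the normal flow equations. Since $\Sigma_t$ evolves by the normal flow of speed $f$ with respect to $g$, the induced metric satisfies $\partial_\tau g_\tau = 2 f\, \ho_\tau$, where $\ho_\tau$ is the second fundamental form of $\Sigma_\tau$. By \cref{lem:totumb+friccinununullo}, $\Sigma_\tau$ is totally umbilic for all $\tau\in[0,t)$, so $\ho_\tau = \frac{\HHH_\tau}{n-1}\,g_\tau$, and therefore $\partial_\tau g_\tau = \tfrac{2f\,\HHH_\tau}{n-1}\, g_\tau$. This is a scalar linear ODE for $g_\tau$ along each point of $\Sigma_0$: writing $g_\tau = \psi(\tau,\cdot)\, g_{\Sigma_0}$ pointwise is not quite legitimate (the conformal factor could a priori depend on direction), but total umbilicity forces exactly this — the right-hand side is a scalar multiple of $g_\tau$, so $g_\tau(\cdot) = \exp\!\big(\tfrac{2}{n-1}\int_0^\tau f\,\HHH_s\, ds\big)\, g_{\Sigma_0}$ pointwise. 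Hence $|g_\tau|$ is controlled as soon as the integral $\int_0^\tau f\,\HHH_s\,ds$ is bounded above and below on $[0,t)$.

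The key quantitative input is that $f/\HHH$ stays bounded away from zero and infinity along the flow. The lower bound on $\HHH_\tau$ (equivalently, an \emph{upper} bound on $f/\HHH$) is the delicate half, and here I would invoke the Heintze-Karcher inequality itself together with $Q(\tau)$: from \cref{eq:Q'(t)} and the vanishing of the integrand in \cref{eq:totumb+friccinullointegral}, one has $Q'(\tau) = -\tfrac{n}{n-1}\int_{\Sigma_\tau} f^2\, d\sigma \le 0$, so $Q(\tau)\le Q(0)$, and since $Q(\tau)\ge \tfrac{n}{n-1}\int_{\Omega_\tau} f\, d\mu + c_N\int_{\partial M}|\nabla f|\,d\sigma > 0$ we keep $Q$ pinched in a fixed interval; combined with the explicit warped-product form \cref{eq:formabuona} — on which $f$ is bounded above and below by positive constants on the compact region $\overline{\Omega}$ — this controls $\int_{\Sigma_\tau} f/\HHH\, d\sigma$ from above and below. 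For the pointwise bound on $\HHH_\tau$ one also uses that, by total umbilicity, $\HHH_\tau$ is constant on each connected $\Sigma_\tau$ (since the Codazzi equation in a warped product plus umbilicity force $\nabla\HHH_\tau$ to vanish, up to the warped-product curvature terms which are themselves controlled), so the integral bound upgrades to a pointwise one after dividing by $|\Sigma_\tau|$, which in turn is controlled by the same $Q$-monotonicity and $|\Om|<\infty$. Feeding $c_t^{-1}\le f/\HHH_\tau \le c_t$ back into the ODE yields \cref{eq:boundg}; the diameter bound \cref{eq:boundediameter} then follows because $\mathrm{diam}_{g_\tau}(\Sigma_\tau)\le \sqrt{|g_\tau|/|g_{\Sigma_0}|}\;\mathrm{diam}_{g_{\Sigma_0}}(\Sigma_0)$ and $\Sigma_0$ is compact.

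Finally, the statement for $\tilde g_\tau$ is immediate from $\tilde g = f^{-2} g$ and $0 < c^{-1} \le f \le c$ on $\overline{\Om}$ (again using the explicit form \cref{eq:formabuona}, where $f>0$ on the compact region up to but not including the horizon — and the region swept by the flow for $\tau < t < \infty$ stays at positive $\tilde g$-distance, hence bounded away from the horizon): $\tilde g_\tau$ and $g_\tau$ are uniformly comparable, so \cref{eq:boundg} and \cref{eq:boundediameter} transfer verbatim. The main obstacle I anticipate is making rigorous the claim that $\HHH_\tau$ is constant on each $\Sigma_\tau$ — i.e. that total umbilicity plus the specific structure of the ambient warped product (not a general manifold) genuinely forces spatial constancy of the mean curvature, which is what converts the integral estimate on $f/\HHH$ into the pointwise estimate the ODE argument needs. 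If constancy of $\HHH_\tau$ is not available directly, one would instead have to run a Harnack-type or maximum-principle argument on the umbilicity-reduced evolution equation for $\HHH_\tau$ to bound $\max_{\Sigma_\tau}\HHH_\tau / \min_{\Sigma_\tau}\HHH_\tau$, which is more technical but still feasible given the a priori sign and the controlled curvature of the background.
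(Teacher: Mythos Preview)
Your evolution equation has the wrong sign. The flow is \emph{inward} (the $\Sigma_\tau$ are $\tilde g$-equidistants moving toward $\partial M$), so in $g$ the velocity is $-f\nu$ and $\partial_\tau g_\tau = -2f\,\ho_\tau = -\tfrac{2f\HHH_\tau}{n-1}\,g_\tau$ by umbilicity. Since $f>0$ in the interior and $\HHH_\tau>0$ is preserved along the flow, $g_\tau$ is pointwise nonincreasing and $|g_\tau|\le|g_0|$ is immediate; no upper bound on $\HHH_\tau$ is needed at all. The complications you anticipate arise only from this sign slip. Worse, the route you propose to recover an upper bound on $\HHH_\tau$ does not work: the constancy claim is false --- Codazzi plus umbilicity gives $\nabla_i\HHH = -\tfrac{n-2}{n-1}\Ric_{i\nu}$, which is nonzero unless $\nu=\partial_r$ (this is exactly \cref{eq:gausscodazziapplied} in the paper). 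More seriously, a pointwise upper bound on $\HHH_\tau$ is precisely what the \emph{next} lemma (\cref{lem:secondbounded}) establishes, and its proof \emph{uses} the diameter bound you are trying to prove here (see \cref{eq:unifdiamexploited}); so your plan, even patched with a Harnack argument, would be circular.

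For comparison, the paper argues in $\tilde g$: from $\tilde\HHH = f\HHH - (n-1)\langle\nabla f,\nu\rangle$ and $\HHH>0$ one gets the one-sided bound $\tilde\HHH > -(n-1)\sup_\Omega|\nabla f|=:-K$, whence $\partial_\tau\log|(\tilde g_\tau)_{ij}| < K$ integrates to $|\tilde g_\tau|\le e^{Kt}|\tilde g_0|$. Your corrected approach in $g$ is actually slightly cleaner, as it avoids the exponential factor. One minor correction: $f$ is not bounded below on $\overline\Omega$ (which contains $\partial M=\{f=0\}$); your later parenthetical --- that the region swept for $\tau<t<\infty$ stays at positive $\tilde g$-distance from the horizon --- is the right formulation.
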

\begin{proof}
As long as the flow is smooth, each level $\Sigma_\tau$ is diffeomorphic to $\Sigma=\Sigma_0$. In particular, for all $\tau\in[0,t)$, there exists a metric $g_\tau$ on $\Sigma$ such that $(\Sigma,g_\tau)$ is isometric to $\Sigma_\tau$ endowed with the metric induced on it by $g$. Obviously the same holds for the conformal metrics $\tilde g_\tau$ induced by $\tilde g$.
This allows us to work on a fixed hypersurface $\Sigma$, letting the metrics $g_\tau$, $\tilde g_\tau$ vary in time. 


Notice that $\tilde\HHH=f\HHH-(n-1)\langle\na f\,|\,\nu\rangle>-(n-1)|\na f|\geq -K$, where $K>0$ is the (finite) maximum value of $(n-1)|\na f|$ in $\Omega$, and where we have used that $\HHH$ is strictly positive along the flow by \cite[Proposition 3.2]{brendle-alexandrov}.  By the evolution $\pa_\tau (\tilde g_\tau)_{ij}=-\tilde\HHH\, (\tilde g_\tau)_{ij}$ we have $\pa_\tau\log|(\tilde g_\tau)_{ij}|=-\tilde\HHH< K$, hence
\begin{equation}
\label{eq:boundgconforme}
|(\tilde g_\tau)_{ij}|\,<\,e^{K\tau}|(\tilde g_0)_{ij}|\leq \mathrm{C}_t\,.
\end{equation}
Since $f$ is bounded in the compact domain $\Omega$ enclosed by $\Sigma$, the above bound implies a fully equivalent one in terms of the metric $g_\tau$ induced by $g$ on $\Sigma_\tau$. This proves~\eqref{eq:boundg}.

For a fixed $\tau\in[0,t)$, let $x_\tau,y_\tau\in\Sigma$ be two points realizing the diameter $\mathrm{diam}_{g_\tau}(\Sigma)$ that we want to estimate, and let $\gamma:[0,\ell]\to \Sigma$ be the $ g_0$-unit length geodesic minimizing the distance between $x_\tau$ and $y_\tau$, with respect to the starting conformal metric $g_0$. 

By \cref{eq:boundg}, the length of $\gamma$ is directly estimated as follows:
\begin{align}
|\gamma|_{g_\tau}\,=\,\int_0^\ell |\dot\gamma(s)|_{g_\tau}ds
\,=\,
\int_0^\ell \sqrt{(g_\tau)_{ij}\dot\gamma^i\dot\gamma^j}(\gamma(s))ds \leq \ell \, \mathrm{C}_t 
\end{align}
By construction, the diameter $\mathrm{diam}_{g_\tau}(\Sigma)$ coincides with the $g_\tau$-distance between the endpoints of $\gamma$, so $\mathrm{diam}_{g_\tau}(\Sigma)$ must be less than or equal to the $g_\tau$-length of $\gamma$. Moreover, by construction, we have $\ell\leq {\rm diam}_{ g_0}(\Sigma)$, and so  we have shown
\[
\mathrm{diam}_{g_\tau}(\Sigma) \leq\abs{\gamma}_{g_\tau} \leq \mathrm{C}_t.
\]
This provides the desired uniform bound on the diameter $\mathrm{diam}_{g_\tau}(\Sigma)$.
\end{proof}
The following is the main observation triggering the smooth long time existence along the $\tilde{g}$-distance flow. 
\begin{lemma}
\label{lem:secondbounded}
In the assumptions of \cref{lem:totumb+friccinununullo}, let $t$ be such that $\Sigma_{\tau}$ is smooth for any $\tau \in [0, t)$. Then, the second fundamental form $\hhh_\tau$ of $\Sigma_\tau$ satisfies
\begin{equation}
\label{eq:secondbounded}
    \abs{\hhh_\tau} \leq \mathrm{C}_t
\end{equation}
for any $\tau \in [0, t)$.
\end{lemma}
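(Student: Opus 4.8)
The plan is to combine the total umbilicity of the flow $\Sigma_\tau$ (from \cref{lem:totumb+friccinununullo}) with the Heintze-Karcher inequality itself in order to obtain a two-sided bound on the single principal curvature $\kappa_\tau$ of $\Sigma_\tau$ (recall that, being totally umbilic, $\hhh_\tau = \kappa_\tau g_\tau$, so $\abs{\hhh_\tau}^2 = (n-1)\kappa_\tau^2$, and equivalently $\HHH_\tau = (n-1)\kappa_\tau$). The mean curvature $\HHH_\tau$ stays strictly positive along the flow by \cite[Proposition 3.2]{brendle-alexandrov}, so $\kappa_\tau > 0$; the content of the lemma is therefore an upper bound on $\HHH_\tau$, which by umbilicity is the same as an upper bound on $\abs{\hhh_\tau}$.

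First I would derive a lower bound on $\int_{\Sigma_\tau} f/\HHH_\tau \, d\sigma$. Indeed, $Q(\tau) = \int_{\Sigma_\tau} f/\HHH \, d\sigma$ is, by \cref{eq:HKQ(t)} together with the equality \cref{eq:hkequalityQ(0)} and the monotonicity encoded in \cref{eq:formula-derivata-integrale}–\cref{eq:totumb+friccinullointegral}, equal to $\frac{n}{n-1}\int_{\Omega_\tau} f\, d\mu + c_N \int_{\partial M}\abs{\nabla f}\,d\sigma$, which is bounded below by a positive constant depending on $t$ (the boundary term is fixed and positive, and $\Omega_\tau \supseteq \Omega_t$ which has positive $f$-weighted volume as long as $\tau < t$). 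On the other hand, using umbilicity $\int_{\Sigma_\tau} f/\HHH_\tau\, d\sigma = \frac{1}{n-1}\int_{\Sigma_\tau} f/\kappa_\tau \, d\sigma$. Combining with the fact that $f$ is bounded above and below by positive constants on the compact region $\Omega_0 \setminus \Omega_t$ (away from $\partial M$, where $f \to 0$ — here one must be slightly careful near the horizon, but since $\Sigma_\tau$ for $\tau < t < \infty$ in the $\tilde g$-distance stays at finite $\tilde g$-distance from $\Sigma$ and hence in a region where $f$ is bounded below, by the argument of \cref{lem:boundeddiam}), one gets $\int_{\Sigma_\tau} 1/\kappa_\tau \, d\sigma \geq \mathrm{C}_t^{-1}$.

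Next, since $\kappa_\tau$ is constant on each connected component of $\Sigma_\tau$ — this is the key gift of total umbilicity: an umbilic hypersurface has $\nabla_X(\kappa) X$ appearing in the Codazzi equation, and in the present warped-product ambient one checks that $\kappa_\tau$ is actually constant along $\Sigma_\tau$, or alternatively one argues componentwise — the bound $\int_{\Sigma_\tau} 1/\kappa_\tau\, d\sigma \geq \mathrm{C}_t^{-1}$ combined with the area upper bound $\abs{\Sigma_\tau}_{g_\tau} \leq \mathrm{C}_t$ from \cref{eq:boundg} forces $1/\kappa_\tau \geq \mathrm{C}_t^{-1}/\abs{\Sigma_\tau} \geq \mathrm{C}_t^{-1}$, i.e. $\kappa_\tau \leq \mathrm{C}_t$, hence $\abs{\hhh_\tau} = \sqrt{n-1}\,\kappa_\tau \leq \mathrm{C}_t$. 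Finally, since $g$ and $\tilde g = f^{-2}g$ are uniformly comparable on the relevant region (again $f$ bounded above and below there), the bound transfers between the $g$- and $\tilde g$-second fundamental forms up to a constant, completing the proof.

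The main obstacle I anticipate is the claim that $\kappa_\tau$ is (componentwise) constant on $\Sigma_\tau$: while total umbilicity gives this for free in space forms via Codazzi, in a general warped product one needs the extra structure — specifically, one should use \cref{eq:friccisigmat}, i.e. that the substatic Ricci tensor vanishes in the normal direction $\nu = \nabla\rho/\abs{\nabla\rho}$, together with the warped-product form \cref{eq:formabuona} of the ambient, to control the tangential derivative of $\kappa_\tau$ through the contracted Codazzi identity $\nabla^j \hhh_{ij} - \nabla_i \HHH = -\Ric(\nu, \partial_i)$. If establishing pointwise constancy of $\kappa_\tau$ turns out to be delicate, a robust fallback is to avoid it entirely: bound $\HHH_\tau$ pointwise by running a maximum-principle / Simons-type argument on the evolution equation for $\HHH_\tau$ under the flow of speed $f$, using that $\abs{\mathring{\hhh}_\tau} = 0$ kills the bad quadratic term and that the curvature terms are bounded on the compact region swept out for $\tau < t$; the diameter and metric bounds from \cref{lem:boundeddiam} then prevent finite-time blow-up of $\HHH_\tau$.
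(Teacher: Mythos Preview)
Your overall strategy --- combine total umbilicity with the Heintze--Karcher lower bound on $\int_{\Sigma_\tau} f/\HHH\,d\sigma$ to rule out blow-up of $\HHH_\tau$ --- is exactly the paper's. The gap is the step where you claim $\kappa_\tau$ is constant on each component. Total umbilicity forces constancy of $\kappa$ only in space forms; in a general warped product the contracted Codazzi identity for an umbilic hypersurface reads $(n-2)\nabla_i \kappa = \Ric(\nu,\partial_i)$ (up to sign/constant), and this right-hand side has no reason to vanish when $\nu$ is not the radial direction. The extra information $[f\Ric-\nabla\nabla f+\Delta f g](\nu,\nu)=0$ concerns only the $(\nu,\nu)$ component and does not kill the mixed terms $\Ric(\nu,\partial_i)$.

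The fix is simpler than either your constancy claim or your evolution-equation fallback, and it is what the paper does: you do not need $\kappa_\tau$ constant, only \emph{almost} constant. From Codazzi and umbilicity, $|\nabla^{\Sigma_\tau}\HHH_\tau|$ is bounded by a universal constant times the ambient $|\Ric|$, which is uniformly bounded on the compact region $\overline\Omega$. Coupling this with the diameter bound $\mathrm{diam}_{g_\tau}(\Sigma_\tau)\leq \mathrm{C}_t$ from \cref{lem:boundeddiam}, one sees that if $\HHH_{\tau_j}(x_{\tau_j})\to+\infty$ along some sequence $\tau_j\to t$, then $\inf_{\Sigma_{\tau_j}}\HHH_{\tau_j}\to+\infty$ as well. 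But the Heintze--Karcher inequality $\int_{\Sigma_{\tau_j}} f/\HHH\,d\sigma \geq c_{\partial M}\int_{\partial M}|\nabla f|\,d\sigma>0$, together with $|\Sigma_{\tau_j}|\leq|\Sigma|$ and $\sup_{\Sigma_{\tau_j}} f\leq \mathrm{C}_t$, forces $\inf_{\Sigma_{\tau_j}}\HHH_{\tau_j}\leq \mathrm{C}_t$, a contradiction. So the missing idea is precisely that a gradient bound on $\HHH$ plus the diameter bound replaces the (false) constancy of $\kappa_\tau$.
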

\begin{proof}
Assume by contradiction that there exists a sequence $\tau_j \to t < +\infty$ as $j \to +\infty$ and points $x_{\tau_j} \in \Sigma_{\tau_j}$ such that $\abs{\hhh_{\tau_j}}(x_{\tau_j})$ blows up as $j \to +\infty$. Then, since the $\Sigma_\tau$'s are totally umbilical, the mean curvature $\HHH_{\tau_j}(x_{\tau_j})$ blows up too. We first show that, then, 
\begin{equation}
\label{eq:meancurvatureexplodesuniformly}
\inf_{x \in \Sigma_{\tau_j}}\HHH_{\tau_j} (x) \to +\infty.
\end{equation}
Indeed, by the Gauss-Codazzi equations and exploiting the total umbilicity we immediately get
\begin{equation}
\label{eq:gausscodazziapplied}
    \nabla_i \HHH(\tau_j) = -\frac{n-2}{n-1}\Ric_{i\nu}
\end{equation}
for any $i \in\{1, \dots, n-1\}$. Since the right hand side is uniformly bounded in $\Omega$, we deduce that $\nabla \HHH$ is uniformly bounded along the evolution. Let then $x \in \Sigma_{\tau_j}$ different from $x_{\tau_j}$. We have
\begin{equation}
\label{eq:unifdiamexploited}
    \HHH(x) \geq \HHH(x_{\tau_j}) - \mathrm{diam}(\Sigma_{\tau_j})\sup_{y\in \Sigma_{\tau_j}}\abs{\nabla \HHH}(y)  \geq \HHH(x_{\tau_j}) - \mathrm{C}_t,
\end{equation}
where the bound on the diameter is  \cref{eq:boundediameter};  \cref{eq:meancurvatureexplodesuniformly} follows. On the other hand, recall that by the Heintze-Karcher inequality 
we have
\begin{equation}
\label{eq:hkapplied}
    \int\limits_{\Sigma_{\tau_j}}\frac{f}{\HHH} d\sigma \geq c_{\partial M} \int_{\partial M} \abs{\nabla f} d\sigma.
\end{equation}
Now, the evolution equations for $\Sigma_\tau$ imply that $\abs{\Sigma_\tau} \leq \abs{\Sigma}$, while, since $t<+\infty$ and $\rho(\Sigma, \partial M) = +\infty$, $\sup_{\Sigma_{\tau_j}} f \leq \mathrm{C}_t$ for some finite $\mathrm{C}_t > 0$. Exploiting this information, we get at once from \cref{eq:hkapplied} that
\begin{equation}
\label{eq:conclusioninfH}
    \inf_{x \in \Sigma_{\tau_j}} \HHH(x) \leq \mathrm{C}_t \abs{\Sigma} \, \left(c_{\partial M} \int_{\partial M} \abs{\nabla f} d\sigma\right)^{-1}, 
\end{equation}
yielding a contradiction with \cref{eq:meancurvatureexplodesuniformly} that completes the proof.
\end{proof}
Concluding from the above that $\Sigma_t$ remains smooth for any $t \in (0, +\infty)$ turns out to be slightly technical, but very classical in nature. The arguments employed were  pioneered by Hamilton \cite{hamilton} in an intrinsic flow setting, and adapted to extrinsic flows by Huisken \cite{huisken-meancurvatureflow}.
\begin{proposition}
\label{prop:smoothevo}
Let $(M, g)$ be a substatic warped product of the form \cref{eq:formabuona}, with a nonempty connected horizon boundary $\partial M$. Let $\Sigma = \partial \Omega \setminus \partial M$ be a smooth, embedded, connected hypersurface homologous to $\partial M$, such that \cref{eq:HK-intro} holds with equality sign. Let $\Sigma_t = \{x \in \Omega \, | \, \rho(\Sigma, x) = t\}$, where $\rho$ is the distance in the conformal metric $\tilde{g} = f^{-2} g$. Then, $\Sigma_t$ is a smooth, embedded, totally umbilic hypersurface such that
\begin{equation}
\label{eq:friccisigmat_2}
[f \Ric - \nabla \nabla f + \Delta f g] (\nu, \nu) = 0,
\end{equation}
for any $t \in [0, \infty)$.
\end{proposition}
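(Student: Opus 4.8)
The plan is to run a standard continuity argument. Let $T \in (0, +\infty]$ be the supremum of the times $t$ such that $\Sigma_\tau$ is a smooth embedded hypersurface for every $\tau \in [0, t)$. Short-time existence and regularity for the $\tilde{g}$-distance flow issued from the smooth embedded $\Sigma$ (equivalently, the local smoothness of the distance function from a smooth hypersurface away from its focal set) give $T > 0$. We want to show $T = +\infty$, so we argue by contradiction, assuming $T < +\infty$ and aiming to extend the flow smoothly past $T$, which will contradict the maximality of $T$.

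First, I would record the uniform estimates available on $[0, T)$. By \cref{lem:secondbounded} the second fundamental form satisfies $\abs{\hhh_\tau} \leq \mathrm{C}_T$ for all $\tau \in [0, T)$, and since the metric evolves by $\partial_\tau (g_\tau)_{ij} = -2 f (\hhh_\tau)_{ij}$ with $f$ bounded on the compact $\overline{\Omega}$, \cref{lem:boundeddiam} upgrades to a two-sided bound $\mathrm{C}_T^{-1} g_0 \leq g_\tau \leq \mathrm{C}_T g_0$ on $\Sigma$, so that all the metrics $g_\tau$ (and likewise the conformal ones $\tilde{g}_\tau$) stay uniformly equivalent. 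Crucially, since $\rho(\Sigma, \partial M) = +\infty$, the flow never reaches the horizon in finite time, so the hypersurfaces $\Sigma_\tau$, $\tau \in [0, T)$, sweep out a region whose closure is a fixed compact subset of $\Omega \cup \Sigma$, bounded away from $\partial M$; in particular the ambient metric $g$, the potential $f$, and all their covariant derivatives are bounded along the flow.

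The next step is the bootstrap to higher-order bounds. Writing the evolution equations for the covariant derivatives $\nabla^m \hhh_\tau$ along the normal flow of speed $f$ (Huisken-type equations, see \cite{huisken-meancurvatureflow}) and combining them with Hamilton's interpolation inequalities \cite{hamilton} on the fixed compact topological type of $\Sigma$, one obtains inductively $\abs{\nabla^m \hhh_\tau} \leq \mathrm{C}_{m, T}$ for every $m \geq 0$ and every $\tau \in [0, T)$. Here the argument is in fact considerably simplified by the total umbilicity of the flow granted by \cref{lem:totumb+friccinununullo}: one has $(\hhh_\tau)_{ij} = \tfrac{\HHH}{n-1}(g_\tau)_{ij}$, and the Codazzi identity \cref{eq:gausscodazziapplied} expresses $\nabla \HHH$ in terms of the ambient Ricci curvature alone, so the higher-order control of $\hhh_\tau$ reduces to differentiating relations involving only the fixed, smooth ambient geometry. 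With these uniform $C^\infty$ bounds, and using the two-sided metric equivalence, the induced metrics $g_\tau$ converge in $C^\infty$ as $\tau \to T$ to a smooth limit metric, and, since the flow is a geodesic normal flow whose speed and all its derivatives are controlled, the hypersurfaces $\Sigma_\tau$ converge in $C^\infty$ to a smooth hypersurface $\Sigma_T$; the absence of focal points up to time $T$ (again a consequence of the bound on $\hhh_\tau$), together with the fact that the flow remains at positive $\tilde{g}$-distance from $\partial M$, forces $\Sigma_T$ to be a smooth embedded hypersurface contained in $\Omega$.

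Finally, the $\tilde{g}$-distance flow issued from the smooth embedded $\Sigma_T$ is smooth on a nontrivial time interval $[0, \epsilon)$; concatenating it with the flow on $[0, T)$ shows that $\Sigma_\tau$ is smooth for all $\tau \in [0, T + \epsilon)$, contradicting the definition of $T$. Hence $T = +\infty$, i.e. $\Sigma_t$ is smooth and embedded for every $t \in [0, +\infty)$, and \cref{lem:totumb+friccinununullo} then yields the total umbilicity of $\Sigma_t$ and the identity \cref{eq:friccisigmat_2} for every $t$. I expect the main obstacle to be the bootstrap step together with the passage to the $C^\infty$ limit: one must make sure the higher-derivative estimates close uniformly up to time $T$ and that the limiting hypersurface is genuinely smooth and embedded rather than merely $C^{1,1}$ or self-touching — it is precisely here that the total umbilicity of the evolving hypersurfaces, reducing everything to the fixed ambient geometry through Codazzi, does the decisive work.
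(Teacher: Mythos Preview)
Your overall strategy matches the paper's: a continuity (open--closed) argument driven by the second fundamental form bound of \cref{lem:secondbounded}, followed by a bootstrap to uniform $C^\infty$ estimates on $[0,T)$, and then a smooth extension past $T$. Your observation that total umbilicity together with the Codazzi identity \cref{eq:gausscodazziapplied} reduces the control of $\nabla^m\hhh_\tau$ to the fixed ambient geometry is a genuine simplification of the paper's bootstrap, which instead runs the Hamilton--Huisken evolution equations for $\nabla^{(k)}\hhh_\tau$ in the conformal metric and closes a Gronwall-type inequality for $\abs{\nabla^{(k)}\hhh_\tau}^2$.

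There is, however, a real gap at the step where you pass to the limit $\Sigma_T$. Uniform $C^\infty$ bounds on the immersions, together with the absence of focal points (which your $\hhh$-bound does guarantee), only give a smooth \emph{immersed} limit; they do not rule out that two distinct sheets of $\Sigma_\tau$ collapse onto each other as $\tau\to T$. Your proposed mechanism---total umbilicity via Codazzi---controls local geometry, not global injectivity, so it does not do the ``decisive work'' here. The paper treats this point explicitly: assuming a first self-intersection at $\Sigma_T$, it distinguishes the cases of transversal tangent planes, coinciding outward normals, and opposite outward normals, ruling out the first two by continuity of the flow and the nesting $\Sigma_T\subset\Omega_{T-\epsilon}$, and the third by the strong maximum principle applied to the two strictly mean-convex sheets touching from opposite sides. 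You should supply this (or an equivalent) argument; once embeddedness of $\Sigma_T$ is secured, your restart-and-contradict step is fine.
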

\begin{proof}
We are going to show that the nonempty set $T\subseteq [0, +\infty)$ defined by
\begin{equation}
\label{eq:setT}
    T = \{t\in [0, +\infty) \, | \, \Sigma_\tau \, \text{is smooth and embedded for} \, \tau \in [0, t]\}
\end{equation}
is both open and closed in $[0, +\infty)$, inferring the eternal smoothness of the flow. The identity \cref{eq:friccisigmat_2} is then a direct consequence of \cref{lem:totumb+friccinununullo}. 

The openness of $T$ is well-known in general; if a closed hypersurface $\Sigma$ is smooth and embedded, then so are the equidistant hypersurfaces $\Sigma_r = \{x \in M \, | \, \mathrm{dist}(\Sigma, x) = r\}$ for any Riemannian metric-induced distance $\mathrm{dist}$, see e.g. \cite[Proposition 5.17]{mantegazza-notesdistance}. In our case, such result is applied to $\Sigma_t$ with $t \in T$ and with respect to the distance induced by $\tilde{g}$.

The closedness of $T$ constitutes the bulk of the proposition, and will be substantially ruled by \cref{lem:secondbounded} only. We are repeatedly employing the evolution equations for $\Sigma_\tau$ along the $\tilde{g}$-distance flow, in the conformal background metric $\tilde{g}$. Indeed, in this setting such equations are simpler to handle, and, since we are staying away from $\partial M = \{f = 0\}$ the estimates we are inferring will automatically hold also in terms of $g$, and \emph{viceversa}.   
In the remainder of this proof, all the quantities taken into account are thus understood as referred to $\tilde{g}$, even when not explicitly pointed out.

Let $T \ni t_j \to t^{-}$ as $j \to +\infty$. Then, $\Sigma_\tau$ is smooth and embedded for any $\tau\in [0, t)$ We want to show that $\Sigma_t$ is smooth and embedded.  
To accomplish this task, we are going to show that 
\begin{equation}
\label{eq:claimboundnablah}
\abs{\nabla^{(k)}{\hhh}_{{\tau}}} \leq \mathrm{C}_t
\end{equation}
for an arbitrary $k \in \N$,
where $\nabla^{(k)}$ is the $k$-th covariant derivative induced by $\tilde{g}$ on $\Sigma_\tau$. Indeed, if this holds, then all the derivatives of the functions whose graphs describe $\Sigma_\tau$ would be uniformly bounded as $\tau \to t$, implying that $\Sigma_t$ would be actually smooth. 
We are going to prove \cref{eq:claimboundnablah} by induction. The case $k = 0$ corresponds to the \cref{eq:secondbounded}, and we assume 
\begin{equation}
\label{eq:claimboundnablah-1}
\abs{\nabla^{(l)}{\hhh}_{{\tau}}} \leq \mathrm{C}_t
\end{equation}
holds for any $l \in \{0, \dots, k-1\}$. We employ the concise notation $T * Q$ to indicate, at some fixed point, linear combinations of contractions of a tensor $T$ with a tensor $Q$ through the metric tensor. The uniform bound on the evolving metric tensors $g_\tau$ was observed in \cref{eq:boundg}. We have
\begin{equation}
\label{eq:derivativenablakh}
    \frac{\partial}{\partial \tau} \nabla^{(k)} \hhh_\tau = \nabla^{(k)} \frac{\partial}{\partial \tau} \hhh_\tau + \nabla^{(l_1)}\frac{\partial}{\partial \tau}\Gamma*\nabla^{(l_2)}\hhh_\tau,
\end{equation}
where $l_1, l_2 \in \N$ satisfy $l_1 + l_2 = k-1$, and the components of $\Gamma$ are the Christoffel symbols of the evolving metric that $\tilde{g}$ induces on $\Sigma_\tau$. We recall that the variation of the components of $\Gamma$ are in fact components of a  tensor, and that it holds
\begin{equation}
\label{eq:derchristoffel}
    \frac{\partial}{\partial \tau} \Gamma^i_{jm} = \frac{1}{2}g^{ir} \left(\nabla_j\frac{\partial}{\partial \tau}g_{mr} + \nabla_m \frac{\partial}{\partial \tau} g_{jr} - \nabla_r \frac{\partial}{\partial \tau} g_{jm} \right),
\end{equation}
for $i, j, l, r \in\{1, \dots, n-1\}$ and where we meant with $g$ the metric induced by $\tilde g$ on $\Sigma_\tau$. Moreover, the second fundamental from $\hhh_\tau$ induced by $\tilde{g}$ on $\Sigma_\tau$ roughly evolves by (see e.g. \cite[Theorem 3.2, 4]{Huisken_Polden})
\begin{equation}
\label{eq:evolutionsecondfundamental}
    \frac{\partial}{\partial \tau} \hhh_\tau = \hhh_\tau * \hhh_\tau + \mathrm{Riem},
\end{equation}
where $\mathrm{Riem}$ denotes some component of the Riemann tensor of the ambient $\tilde{g}$. Observe that, since $t$ is finite, $\mathrm{Riem}$, as well as any of its $\tilde{g}$-covariant derivative, remains bounded on $\Sigma_\tau$ as $\tau \to t^{-}$. Plugging \cref{eq:evolutionsecondfundamental} and \cref{eq:derchristoffel} into \cref{eq:derivativenablakh}, and directly estimating by means of \cref{eq:claimboundnablah-1}, we get that
\begin{equation}
\label{eq:derivativenablahestimated}
    \frac{\partial}{\partial \tau} \nabla^{(k)} \hhh_\tau = \nabla^{(k)}\hhh_\tau * T + Q,
\end{equation}
where $T$ and $Q$ are tensors uniformly bounded on $\Sigma_\tau$ also as $\tau \to t^-$. Then, taking into account once again that $\partial_\tau g_\tau = -\HHH_\tau g_\tau$, and that is an uniform bounded quantity as $\tau \to t^-$ thanks to \cref{eq:secondbounded}, we deduce from \cref{eq:derivativenablahestimated}
\begin{equation}
\label{eq:boundderivativenormderh}
    \frac{\partial}{\partial \tau} \abs{\nabla^{(k)} \hhh_\tau}^2 \leq \mathrm{C_1} \abs{\nabla^{(k)} \hhh_\tau}^2 + \mathrm{C}_2,
\end{equation}
where both $\mathrm{C}_1$ and $\mathrm{C}_2$ are constants uniformly bounded as $\tau \to t^{-}$. Integrating \cref{eq:boundderivativenormderh} for $\tau \in [0, t)$ provides the claimed \cref{eq:claimboundnablah}, inferring the smoothness of $\Sigma_t$.

\smallskip

{We are left to discuss the embeddedness of $\Sigma_t$. Since $\Sigma_t$ is compact, it is sufficient to show that $\Sigma_t$ has no self intersections. Without loss of generality, suppose that $t$ is the first time such that $\Sigma_{t}$ has a self-intersection $x$. Then, in a neighborhood of $x$, by smoothness and compactness, $\Sigma_t$ is a finite union of smooth embedded hypersurfaces $S_1,\dots, S_k$. 
First of all, if two of these hypersurfaces, say $S_1$ and $S_2$, have different tangent spaces, then it is easily seen by continuity of the flow that $\Sigma_{t-\ep}$ also has self-intersections for times $t-\ep$ close to $t$ (see Figure~\ref{fig:case1}), against our assumption that $t$ is the first value having them.

Thus, it only remains to analyze the case where the tangent space to $S_1,\dots,S_k$ is the same. If the outward pointing normal is the same for $S_1$ and $S_2$, then it is easy to see that the level set $\Sigma_{t-\ep}$ must intersect $\Sigma_t$ (this case is exemplified in Figure~\ref{fig:case2}). This is impossible since $\Sigma_t$ must be contained in the interior of the compact domain $\Omega_{t-\ep}$ enclosed by $\Sigma_{t-\ep}$ for $\ep>0$.

The only case that is left to rule out is the case where there are exactly two hypersurfaces $S_1$ and $S_2$, with outward normals pointing in opposite directions. With similar reasonings as in the previous cases, we can conclude that the situation is as in Figure~\ref{fig:case3}. Namely, we can find coordinates $(x^1,\dots,x^n)$  centered at $x$ such that $S_1=\{x^1= 0\}$ and its outward normal points towards $\{x^1\leq 0\}$, whereas $S_2$ is contained in $\{x^1\geq 0\}$ and its outward normal points towards $\{x^1\geq 0\}$. Recalling that $\Sigma_t$ is mean convex, this configuration is clearly ruled out by the maximum principle.}
\end{proof}


\begin{figure}
 \centering
 \subfigure[Linearly independent normals\label{fig:case1}]
   {\includegraphics[scale=1.3]{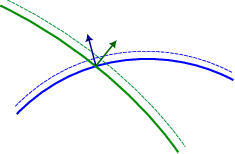}}
 \hspace{10mm}
 \subfigure[Same normal\label{fig:case2}]
   {\includegraphics[scale=1.3]{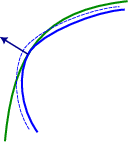}}
   \hspace{10mm}
 \subfigure[Opposite normals\label{fig:case3}]{\includegraphics[scale=1.3]{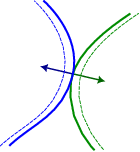}}
 \caption{\small Different configurations at the self-intersection. In blue and green are the two hypersurfaces $S_1$, $S_2$ and the corresponding normal vectors. The dashed lines represent the backward evolution of $S_1$ and $S_2$, that is, they are contained in $\Sigma_{t-\ep}$, for a small $\ep>0$.
 \label{fig:configs}}
 \end{figure}

\subsection{Montiel-type argument and conclusion.}

Thanks to~\cref{eq:fricciradiale nullo} and~\cref{prop:smoothevo}, we have two directions along which the tensor $f\Ric-\na\na f+\Delta f g$ vanishes. A crucial observation is that, if $\Sigma$ is not a cross-section, these two directions are distinct at almost all points of $\Sigma$. This follows from an argument of Montiel \cite{Montiel}. In our substatic setting, thanks to \cref{lem:desittersoddisfa}, this forces the region $\Sigma$ lives in to be of the special form \cref{eq:splittingdesitter}. 
\begin{proposition}
\label{prop:montiel}
Let $(M, g)$ be a substatic warped product of the form~\cref{eq:formabuona}, with a nonempty connected horizon boundary $\partial M$. Let $\Sigma = \partial \Omega \setminus \partial M$ be a smooth, embedded, orientable, connected hypersurface homologous to $\partial M$. Suppose that $\Sigma$ is totally umbilical,  that it holds
\[
\left[f \Ric - \nabla \nabla f + \Delta f g\right](\nu,\nu) = 0\,
\]
and that $\Sigma$ is \emph{not} a cross-section.
where $\nu$ is a unit normal to $\Sigma$.
Then, the function $f$ has the form~\eqref{eq:splittingdesitter} in the region $[s_{\rm min},s_{\rm max}]\times N$, where
\begin{equation}
    s_{\rm min} = \min\{s(x), x \in \Sigma\}, \qquad \qquad
    s_{\rm max} = \max\{s(x), x \in \Sigma\}.
\end{equation}
\end{proposition}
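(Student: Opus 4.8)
The strategy is to observe that under the stated hypotheses the substatic tensor $Q := f\Ric - \nabla\nabla f + \Delta f g$ has \emph{two} independent null directions at most points of $\Sigma$ — the radial one $\partial_r$, which by \cref{eq:fricciradiale nullo} (always valid in a warped product, cf.\ \cref{eq:fricciwarped}) is everywhere in its kernel, and the normal $\nu$, by assumption — and then to feed this, level by level, into \cref{lem:splittingdesitter}. First, \cref{eq:fricciwarped} shows that on each slice $Q$ is $f$ times a tensor pulled back from $N$ plus a multiple of $g_N$; in particular $Q(\partial_r, W) = 0$ for every $W$ tangent to $N$, while $Q(\partial_r, \partial_r) = 0$. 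Hence, writing at a point $p \in \Sigma$ the $g$-orthogonal decomposition $\nu = a\,\partial_r + X$ with $X \in T_pN$ and expanding $0 = Q(\nu,\nu)$, all radial and mixed terms drop and we are left with $Q(X,X) = 0$. Since $\nabla^\Sigma r = \partial_r - a\,\nu$, the point $p$ is a critical point of the restriction of $r$ to $\Sigma$ exactly when $\nu$ is radial; at every \emph{non}-critical point we therefore obtain a \emph{nontrivial} null direction $X$ of $Q$ lying in $T_xN$, over the level $r = r(p)$.

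The next step is to realise such a null direction over \emph{every} level attained on $\Sigma$. As $\Sigma$ is connected and compact and, not being a cross-section, is not contained in any slice, $r(\Sigma) = [r_{\rm min}, r_{\rm max}]$ with $r_{\rm min} < r_{\rm max}$; moreover $r_{\rm min} > 0$, since $\Sigma$ is disjoint from $\partial M = \{r = 0\}$. By Sard's theorem almost every $t \in (r_{\rm min}, r_{\rm max})$ is a regular value of $r|_\Sigma$, and the corresponding level set is nonempty by the intermediate value theorem, so for a.e.\ $t$ there are $x \in N$ and a nontrivial $X \in T_xN$ with $Q_{(t,x)}(X,X) = 0$ — this is exactly the Montiel-type observation that the two null directions are distinct for almost every point of $\Sigma$. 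To promote ``almost every $t$'' to ``every $t$'', one invokes \cref{eq:fricciwarped} once more: the existence of such an $X$ over the level $t$ is equivalent to $\lambda(x) = -\tfrac{1}{2}\,\dot h(t)\,\dot F(t) =: \kappa(t)$ for some $x \in N$, where $\lambda(x)$ is the lowest eigenvalue with respect to $g_N$ of $\Ric_{g_N} - (n-2)c\,g_N$ (substaticity forces $f\lambda(x) + \tfrac{1}{2}\dot h^2\dot F \geq 0$ pointwise, so a null direction occurs precisely where this vanishes, and there $f = \dot h > 0$ can be divided out). Now $\kappa$ is continuous on $[r_{\rm min}, r_{\rm max}] \subset (0, \overline{r})$ and $\lambda(N)$ is a compact interval (as $N$ is connected and $\lambda$ continuous); since $\kappa(t) \in \lambda(N)$ for a.e.\ $t$, continuity forces $\kappa(t) \in \lambda(N)$ for \emph{every} $t \in [r_{\rm min}, r_{\rm max}]$, and the intermediate value theorem on $N$ then supplies, for each such $t$, a point $x$ with $\lambda(x) = \kappa(t)$, that is a nontrivial $N$-tangent null direction of $Q$ over every level.

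With this in hand, the hypotheses of \cref{lem:splittingdesitter} are met on the closed interval $[a,b] = [r_{\rm min}, r_{\rm max}]$: indeed $(M,g)$ is a substatic warped product of the form \cref{eq:formabuona} so \cref{eq:fricciradiale nullo} holds, $h$ is non-constant since $\dot h = f > 0$ away from $\partial M$, and \cref{eq:friccitan nullo} holds over every $t \in [a,b]$. Applying the lemma yields that $g$ and $f$ have the form \eqref{eq:splittingdesitter} on $[r_{\rm min}, r_{\rm max}] \times N$, which through $s = h(r)$ (with $h$ increasing) is precisely $[s_{\rm min}, s_{\rm max}] \times N$, as claimed. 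The delicate point — and the only one requiring real care — is the passage from ``almost every level'' to ``every level'': Sard's theorem only sees regular values, so one genuinely needs the set $\{t : \kappa(t) \in \lambda(N)\}$ to be closed, which in turn rests on the explicit formula \cref{eq:fricciwarped} and on the continuity of $\dot F$ up to the endpoints $r_{\rm min}, r_{\rm max}$, both interior to $(0,\overline{r})$. Everything else is bookkeeping with \cref{eq:fricciwarped} and a check that \cref{lem:splittingdesitter} applies verbatim; total umbilicity of $\Sigma$ enters Montiel's classical argument to control the set of radial points, but, as the Sard argument shows, it can in fact be bypassed for this particular step.
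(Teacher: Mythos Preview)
Your proof is correct, and it takes a genuinely different route from the paper's.

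The paper follows Montiel's classical argument: it introduces the radial conformal field $Y=\nabla\phi$ with $\phi=\int (s/f)\,ds$, so that $\nabla\nabla\phi=fg$, and computes the intrinsic Hessian $\nabla_\Sigma\nabla_\Sigma\phi = f g_\Sigma - g(Y,\nu)\,\hhh$. Total umbilicity then makes this a scalar multiple of $g_\Sigma$, and Tashiro's splitting theorem forces $\Sigma$ to be a spherical warped product with exactly two critical points of $\phi$ (equivalently, of $r$). Hence $Y^\top\neq 0$ away from these two points, which after projecting onto $TN$ produces a nontrivial null direction of $Q$ over every level in $(r_{\min},r_{\max})$; \cref{lem:splittingdesitter} is then invoked.

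You sidestep the Tashiro machinery entirely. You obtain the null $N$-tangent direction directly from the decomposition $\nu=a\,\partial_r+X$ and $Q(\partial_r,\cdot)=0$, then use Sard to cover almost every level, and finally close up to all levels by the continuity argument based on \cref{eq:fricciwarped}: the existence of a null $X\in T_xN$ at level $t$ reads $\lambda(x)=\kappa(t)$, and $\{\,t:\kappa(t)\in\lambda(N)\,\}$ is closed. This is more elementary and, as you observe, does not use total umbilicity at all for this particular step. What the paper's route buys in exchange is the structural bonus, recorded in the remark following its proof, that $\Sigma$ is itself a spherical warped product; your Sard argument does not see this. Both approaches handle the two endpoint levels $r_{\min},r_{\max}$ by continuity---you do it explicitly, the paper implicitly through the smoothness of $\eta$ away from $\partial M$.
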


\begin{proof}
The function $f$ depends on the coordinate $s$ only, so it is well defined (up to a constant) the function $\phi=\int (s/f) ds$. We can compute rather easily $\na\na\phi=f g$. In other words, the vector $Y=\na\phi=sf\pa/\pa s$ satisfies $\na Y =\na\na\phi=fg$. Let $Y^\top=Y-g(Y,\nu)\nu$ be the projection of $Y$ on $\Sigma$. If $\nabla_\Sigma$ is the covariant derivative induced by $\nabla$ on $\Sigma$, then $Y^\top=\nabla_\Sigma\phi$. 

We are assuming that $\Sigma$ is not a cross section, that is, $Y^\top$ does not vanish pointwise on $\Sigma$. Following the argument in~\cite[Lemma~4]{Montiel}, for every vector field $Z\in \Gamma(T\Sigma)$ we compute
\begin{align}
\na_\Sigma\na_\Sigma\phi(Z,\cdot)&=\nabla_\Sigma Y^\top(Z,\cdot)
\\
&=\left[\nabla_Z(Y-g(Y,\nu)\nu)\right]^{\top}
\\
&=(\nabla_Z Y)^\top-\left[\nabla_Z g(Y,\nu)\nu+g(Y,\nu)\nabla_Z\nu\right]^\top,
\end{align}
where we are using the notation ${}^\top$ to denote the projection on $\Sigma$ (namely, for a vector field $X\in \Gamma(TM)$, we denote by $X^\top$ the vector $X-g(X,\nu)\nu\in \Gamma(T\Sigma)$). 
Since $\nabla Y=fg$, $\nu^\top=0$, and $(\nabla_Z\nu)^\top=\hhh(Z,\cdot)$, where $\hhh$ is the second fundamental form of $\Sigma$, we deduce
\[
\nabla_\Sigma\nabla_\Sigma\phi=fg_\Sigma-g(Y,\nu)\hhh.
\]
In particular, if $\Sigma$ is umbilical, then $\hhh=\HHH/(n-1)g_\Sigma$ and we obtain
\[
\nabla_\Sigma\na_\Sigma\phi=\left[f-g(Y,\nu)\frac{\HHH}{n-1}\right]g_\Sigma.
\]
Since $\Sigma$ is compact and $Y^\top$ does not vanish pointwise (meaning that $\phi$ is nontrivial), it is then well known \cite{tashiro, cheeger-colding-lowerbounds, catino-mantegazza-mazzieri} that $\Sigma$ must be a warped product with spherical cross-sections. Namely
\[
\Sigma=[0,R]\times\mathbb{S}^{n-2},\qquad g_\Sigma=d\rho\otimes d\rho+\lambda^2g_{\mathbb{S}^{n-2}},
\]
where $\rho$ is the coordinate on $[0,R]$ and $\lambda=\lambda(\rho)$ is positive in $(0,R)$, $\lambda(0)=\lambda(R)=0$, $\lambda'(0)=-\lambda'(R)=1$.
Furthermore the following relations hold:
\[
\lambda'=f-g(Y,\nu)\frac{\HHH}{n-1},\qquad \lambda=\frac{\pa}{\pa\rho}\phi_{|_{\Sigma}},\qquad Y^\top=\nabla_\Sigma\phi=\lambda\frac{\pa}{\pa\rho}.
\]
In particular, $Y^\top$ is different from zero on $\Sigma \setminus \{x, y\}$, with $x, y$ being  the two points corresponding to $\rho=0$ and $\rho=R$. 

Since we are assuming that the tensor $f \Ric - \nabla \nabla f + \Delta f g$ vanishes in the $\nu$ direction and we know this holds also in the $Y$ direction (recall \cref{subsec:brendle-comparison}), we must also have
\[
\left[f \Ric - \nabla \nabla f + \Delta f g\right](Y^\top,Y^\top)=0
\]
at all points of $\Sigma$.
We can then apply \cref{lem:splittingdesitter} with $X = Y^{\top}$ 
to conclude.
\end{proof}

\begin{remark}
The above proof intriguingly shows also that a hypersurface $\Sigma$ as in the statement of \cref{prop:montiel} is itself a warped product.
\end{remark}
We are now ready to conclude the proof of \cref{thm:warped-intro}.
\begin{proof}[Proof of \cref{thm:warped-intro}]

We can restrict our attention to the case of nonempty boundary $\partial M = \{f =0\}$ with $\Sigma$ homologous to $\partial M$, since the empty boundary (or null-homologous) case is fully covered by \cref{thm:rigiditygeneral}. 
We consider the evolution of $\Sigma$ given by the $\Sigma_t \subset \Omega$ at $\tilde{g}$-distance $t$. By \cref{prop:smoothevo}, $\Sigma_t$ is smooth for any $t \in [0, +\infty)$. Suppose by contradiction that $\Sigma$ is not a cross section. Then, $\Sigma_t$ is not a cross-section for any $t \in (0, +\infty)$, for otherwise all of its $\tilde{g}$-equidistant hypersurfaces would be cross-sections, including $\Sigma$. Moreover, as recalled in \cref{lem:totumb+friccinununullo}, the $\Sigma_t$'s are totally umbilical  
and satisfy
\[
\left[f \Ric - \nabla \nabla f + \Delta f g\right](\nu,\nu)=0\,.
\]
We can then apply \cref{prop:montiel} to any $\Sigma_t$, and deduce that in the region foliated by such evolution $f$ can be written as \eqref{eq:splittingdesitter}. 
Since, as $t \to +\infty$, $\Sigma_t$ by construction gets closer and closer to $\partial M$, this holds in the whole of $\Omega$. But then, as observed in \cref{rem:desittersoddisfaremark}, by \cref{lem:desittersoddisfa} the condition \cref{eq:continualbordo} is satisfied.
We can then apply \cref{thm:rigiditybaby} to conclude that $\Sigma$ is a cross-section, a contradiction that concludes the proof.
\end{proof}

\appendix

\section*{Appendix: Proof of Proposition~\ref{prop:char-warped} and Lemma~\ref{lem:splittingdesitter}}

\setcounter{equation}{0}
\renewcommand{\theequation}{A\thesection.\arabic{equation}}
\renewcommand{\thesubsection}{A\thesection.\arabic{subsection}}

We consider warped products 
\[
M=I\times N\,,\qquad g=dr\otimes dr+h^2g_N\,,
\]
with $h=h(r)$ positive, satisfying the substatic condition
\begin{equation}
\label{eq:substatic_cond_app}
\Ric-\frac{\nabla\nabla f}{f}+\frac{\Delta f}{f}g\geq 0
\end{equation}
for some function $f=f(r)$ that is assumed to be nonnegative and zero exactly on the (possibly empty) boundary of $M$. Our aim will be that of proving Proposition~\ref{prop:char-warped}.  To this end, we start by writing down the components of the relevant quantities in the substatic condition.

The Ricci tensor of a warped product is known to satisfy
\begin{align}
\RRR_{rr}\,&=\,-(n-1)\frac{\ddot h}{h}\,,
\\
\RRR_{ir}\,&=\,0\,,
\\
\RRR_{ij}\,&=\,\RRR^N_{ij}-\left[h\ddot h+(n-2)\dot h^2\right]g^N_{ij}\,.
\end{align}
Since both $f$ and $h$ are functions of the coordinate $r$ only, the Hessian and Laplacian are given by the following formulas
\begin{align}
\na^2_{rr}f\,&=\,\ddot f\,,
\\
\na^2_{ir}f\,&=\,0\,,
\\
\na^2_{ij}f\,&=\,h\dot h\dot f g^N_{ij}\,,
\\
\Delta f\,&=\,\ddot f+(n-1)\frac{\dot h}{h}\dot f\,.
\end{align}
Substituting in~\eqref{eq:substatic_cond_app}, we find out that the substatic condition is equivalent to the following two inequalities:
\begin{equation}
\label{eq:substatic_wp}
\begin{aligned}
\dot h\frac{\dot f}{f}\,&\geq\,\ddot h\,,
\\
\Ric_{g_N}\,&\geq\,
h^2\left[\frac{\ddot h}{h}-\frac{\ddot f}{f}+(n-2)\frac{\dot h^2}{h^2}-(n-2)\frac{\dot h}{h}\frac{\dot f}{f}\right]g_N\,.
\end{aligned}
\end{equation}

We are now ready to provide the proof of \cref{prop:char-warped}, telling between the case of a constant $h$, that is the cylindrical splitting of, and of a nonconstant $h$.

\begin{proof}[Proof of \cref{prop:char-warped}]

\emph{The product case.}
We impose $\dot h=0$. Up to a rescaling of $g_N$ we can then just set $h\equiv 1$. The first identity in~\eqref{eq:substatic_wp} is trivial when $\dot h=0$.
The second inequality in~\eqref{eq:substatic_wp} instead reduces to
\[
\Ric_{g_N}\,\geq\,
-\frac{\ddot f}{f}g_N\,.
\]
In particular, if $c$ is the minimum value such that there exists $X\in TN$ with $\Ric_{g_N}(X,X)=(n-2)c |X|^2_{g_N}$, we must have
\[
\ddot f+(n-2)cf\,\geq\,0\,.
\]
For any $f$ satisfying the above inequality and any $(n-1)$-dimensional Riemannian manifold $(N,g_N)$ with $\Ric_{g_N}\geq (n-2)cg_N$ the product manifold $(I\times N,dr\otimes dr+g_N)$ is substatic. 


\smallskip

\noindent
\emph{The warped product case.}
We now consider the case where $h$ is not constant. Since we are assuming
\[
[f \Ric - \nabla \nabla f + \Delta f g] (\na r, \na r) = 0,
\]
the first inequality in~\eqref{eq:substatic_wp} is saturated, forcing $f=k\dot h$, for some constant $k\in\R$. Letting also $c\in\R$ be the minimum value such that there exists $X\in TN$ with $\Ric_{g_N}(X,X)=(n-2)c |X|^2_{g_N}$, we can rewrite the second inequality in~\eqref{eq:substatic_wp} as follows
\begin{equation}
\label{eq:substatic_casef=hdot_wp}
\frac{\dddot h}{\dot h}+(n-3)\frac{\ddot h}{h}-(n-2)\frac{\dot h^2-c}{h^2}\,\geq\,0\,.
\end{equation}

This inequality also appears in~\cite[p.~253]{brendle-alexandrov}.
Since $f$ is assumed to be positive in $M$, notice that in particular this forces $\dot h$ to have a sign. Up to changing the sign of the coordinate $r$, we can assume $\dot h>0$. In particular, $h$ is a monotonic function of $r$, which means that we can use $h$ as coordinate in place of $r$. We use $'$ to denote derivative with respect to $h$. 
Considering then the function 
\[
\psi\,=\,\frac{\dot h^2-c}{h^2}\,,
\]
observing that $\psi'=\dot \psi/\dot h$, we compute
\begin{align}
\left(h^{n+1}\psi'\right)'\,&=\,2\left(h^{n-1}\ddot h-h^{n-2}(\dot h^2-c)\right)'
\\
&=\,2h^{n-1}\left(\frac{\dddot h}{\dot h}+(n-3)\frac{\ddot h}{h}-(n-2)\frac{\dot h^2-c}{h^2}\right)\,.
\end{align}
Therefore,
inequality~\eqref{eq:substatic_casef=hdot_wp} gives
\begin{equation}
\label{eq:substatic_wp_radial_simplified}
\left(h^{n+1}\psi'\right)'\,\geq\,0\,,
\end{equation}
which in turn tells us that
\[
\psi\,=\,\int\frac{\mu}{h^{n+1}}dh\,,
\]
where $\mu=\mu(h)$ satisfies $\mu'\geq 0$. 
This is equivalent to asking $\psi=\eta(h^{-n})$, with $\eta''\geq 0$.
More explicitly, the substatic potential $f$ and the warping function $h$ must satisfy
\[
f\,=\,k\dot h\,=\,k\sqrt{c+h^2\eta(h^{-n})}\,,\quad \eta''\geq 0\,.
\]
Summing all up, we have found that all substatic warped products $(I\times N,g,f)$ with $f$ radial and such that $f \Ric - \nabla \nabla f + \Delta f g$ vanishes in the radial direction are isometric to a solution having the following form
\[
M=[a,b]\times N\,,\quad g=k^2\frac{ds\otimes ds}{f^2}+s^2\,g_N\,,\quad \Ric_{g_N}\geq (n-2)c g_N\,,\quad
f\,=\,k\sqrt{c+s^2\eta(s^{-n})}\,,
\]
where $0<a<b$, $c>0$, $k>0$ are constants, $s$ is a coordinate on $[a,b]$, and $\eta:[b^{-n},a^{-n}]\to\R$ satisfies $\eta''\geq 0$.
\end{proof}
We conclude with the proof of \cref{lem:splittingdesitter}.
\begin{proof}[Proof of \cref{lem:splittingdesitter}]
If we further assume, as in the statement, that at the point $p$ with coordinates $(s_0,x)$ there exists a vector $X\in T_pN$ such that $[f \Ric - \nabla \nabla f + \Delta f g](X,X)=0$, then it easily follows that the second inequality in~\eqref{eq:substatic_wp} is saturated. Retracing the computations above one then deduces that $\eta''=0$ at $s=s_0$. If it is possible to find such a vector $X$ for every $s$ in an interval $[s_0,s_1]\subset [a,b]$, then we can integrate the identity $\eta''=0$ in $[s_1^{-n},s_0^{-n}]$, obtaining $\eta(t)=-\lambda-2mt$ for constants $m,\lambda$. Substituting in the formula for $f$ we then obtain
\[
f\,=\,k\sqrt{c-\lambda\,s^2-2m\,s^{2-n}}\,,
\]
as claimed.
\end{proof}

\printbibliography

\end{document}